\documentclass[a4paper,11pt,fleqn]{article}
\usepackage{amsmath}
\usepackage{amssymb}
\usepackage{mathabx} 
\usepackage{theorem}
\usepackage{mathrsfs} 
\usepackage[usenames,dvipsnames]{pstricks}
\usepackage{pstricks-add}
\usepackage{euscript}
\usepackage{graphicx}
\usepackage{enumitem}
\usepackage{charter}
\usepackage{empheq}
\usepackage{caption}
\usepackage{subcaption}
\newcommand{\email}[1]{\href{mailto:#1}{\nolinkurl{#1}}}
\oddsidemargin -0.2cm
\textwidth 17.0cm 
\topmargin -0.2cm
\headheight 0.0cm
\textheight 22.8cm
\parindent 6mm
\parskip 3pt
\tolerance 1000
\definecolor{dred}{HTML}{D90404}
\definecolor{labelkey}{rgb}{0,0.08,0.45}
\definecolor{refkey}{rgb}{0,0.6,0.0}
\definecolor{Brown}{rgb}{0.45,0.0,0.05}
\definecolor{dgreen}{rgb}{0.00,0.49,0.00}
\definecolor{dblue}{rgb}{0,0.08,0.75}
\RequirePackage[colorlinks,hyperindex]{hyperref} 
\hypersetup{linktocpage=true,citecolor=dblue,linkcolor=dgreen}
\PassOptionsToPackage{normalem}{ulem}
\usepackage{ulem}

\newcommand{\setoile}{{\scalebox{0.7}{\ensuremath{*}}}}
\newcommand{\petoile}{p^{\scalebox{0.7}{\ensuremath{*}}}}
\newcommand{\phietoile}{\phi^{\scalebox{0.7}{\ensuremath{*}}}}
\newcommand{\varphie}{\varphi^{\scalebox{0.7}{\ensuremath{*}}}}
\newcommand{\fetoile}{f^{\scalebox{0.7}{\ensuremath{*}}}}
\renewcommand{\leq}{\ensuremath{\leqslant}}
\renewcommand{\geq}{\ensuremath{\geqslant}}

\newcommand{\Scal}[2]{\bigg\langle{#1}\;\bigg|\:{#2}\bigg\rangle} 
\newcommand{\scal}[2]{{\left\langle{{#1}\mid{#2}}\right\rangle}}

\newcommand{\menge}[2]{\big\{{#1}~\big |~{#2}\big\}} 
\newcommand{\mEnge}[2]{\bigg\{{#1}~\big |~{#2}\bigg\}} 

\newcommand{\phaut}[1]{{#1}^{{\tiny\mbox{$\wedge$}}}}
\newcommand{\haut}[1]{{#1}^{{\small\mbox{$\blacktriangleup$}}}}
\newcommand{\haute}[1]{{#1}^{\small\raisebox{-0.2mm}%
{$\blacktriangleup$}*}}
\newcommand{\pbas}[1]{{#1}^{{\tiny\mbox{$\vee$}}}}

\newcommand{\bas}[1]{{#1}^{\small\raisebox{0.0mm}%
{$\blacktriangledown$}}}
\newcommand{\base}[1]{{#1}^{\small\raisebox{0.0mm}%
{$\blacktriangledown$}*}}

\newcommand{\rec}{\ensuremath{\text{\rm rec}\,}}
\newcommand{\epi}{\ensuremath{\text{\rm epi}\,}}

\newcommand{\HH}{\ensuremath{{\mathcal H}}}

\newcommand{\GG}{\ensuremath{{\mathcal G}}}

\newcommand{\rocky}{\ensuremath{\mbox{\footnotesize$\odot$}}}

\newcommand{\emp}{\ensuremath{{\varnothing}}}
\newcommand{\Id}{\ensuremath{\operatorname{Id}}}

\newcommand{\ppersp}{\ensuremath{\ltimes}}
\newcommand{\persp}{\ensuremath{\mbox{\small$\ltimes$}%
\hspace{-2.6mm}\raisebox{0.20mm}%
{\mbox{\scriptsize$\blacktriangleright$}}\;}}

\newcommand{\RR}{\ensuremath{\mathbb{R}}}
\newcommand{\RP}{\ensuremath{\left[0,+\infty\right[}}
\newcommand{\RM}{\ensuremath{\left]-\infty,0\right]}}
\newcommand{\RMX}{\ensuremath{\left[-\infty,0\right]}}
\newcommand{\RMM}{\ensuremath{\left]-\infty,0\right[}}
\newcommand{\RMMX}{\ensuremath{\left[-\infty,0\right[}}

\newcommand{\RPP}{\ensuremath{\left]0,+\infty\right[}}
\newcommand{\RPPX}{\ensuremath{\left]0,+\infty\right]}}

\newcommand{\RPX}{\ensuremath{\left[0,+\infty\right]}}
\newcommand{\RX}{\ensuremath{\left]-\infty,+\infty\right]}}
\newcommand{\RXX}{\ensuremath{\left[-\infty,+\infty\right]}}
\newcommand{\RXM}{\ensuremath{\left[-\infty,+\infty\right[}}

\newcommand{\NN}{\ensuremath{\mathbb N}}

\newcommand{\range}{\ensuremath{\text{\rm range}\,}}

\newcommand{\pinf}{\ensuremath{{+\infty}}}
\newcommand{\minf}{\ensuremath{{-\infty}}}
\newcommand{\dom}{\ensuremath{\text{\rm dom}\,}}
\newcommand{\cdom}{\ensuremath{\overline{\text{\rm dom}}\,}}
\newcommand{\proj}{\ensuremath{\text{\rm proj}}}
\newcommand{\prox}{\ensuremath{\text{\rm prox}}}

\newcommand{\sign}{\ensuremath{\text{\rm sign}}}

\newcommand{\inte}{\ensuremath{\text{\rm int}}\,}

\newcommand{\cconv}{\ensuremath{\overline{\text{\rm conv}}\,}}

\newcommand{\zeroun}{\ensuremath{\left]0,1\right[}}


\newtheorem{theorem}{Theorem}[section]
\newtheorem{lemma}[theorem]{Lemma}

\newtheorem{proposition}[theorem]{Proposition}
\theoremstyle{plain}{\theorembodyfont{\rmfamily}%
}
\theoremstyle{plain}{\theorembodyfont{\rmfamily}%
}
\theoremstyle{plain}{\theorembodyfont{\rmfamily}%
}
\theoremstyle{plain}{\theorembodyfont{\rmfamily}%
}
\theoremstyle{plain}{\theorembodyfont{\rmfamily}%
}
\theoremstyle{plain}{\theorembodyfont{\rmfamily}%
}
\theoremstyle{plain}{\theorembodyfont{\rmfamily}%
\newtheorem{example}[theorem]{Example}}
\theoremstyle{plain}{\theorembodyfont{\rmfamily}%
\newtheorem{remark}[theorem]{Remark}}
\theoremstyle{plain}{\theorembodyfont{\rmfamily}%
\newtheorem{definition}[theorem]{Definition}}
\theoremstyle{plain}{\theorembodyfont{\rmfamily}%
}

\numberwithin{equation}{section}
\setlength{\itemsep}{1pt} 
\def\abstract{\noindent{\bfseries Abstract}. \ignorespaces}

\begin{document}
\title{\sffamily \vskip -9mm 
Proximity Operators of Perspective Functions with Nonlinear 
Scaling\thanks{Contact author: P. L. Combettes, 
\email{plc@math.ncsu.edu}, phone: +1 919 515 2671. 
The work of L. M. Brice\~{n}o-Arias was supported by 
Agencia Nacional de Investigaci\'on y Desarrollo 
(ANID-Chile) under grants FONDECYT 1190871 and 1230257, and by
Centro de Modelamiento Matem\'atico (CMM), ACE210010 and FB210005,
BASAL funds for centers of excellence. 
The work of P. L. Combettes was supported by the National Science
Foundation under grant DMS-1818946. The work of F. J. Silva was
partially supported by Agence Nationale de la Recherche (ANR),
project ANR-22-CE40-0010, and by KAUST through the subaward
agreement ORA-2021-CRG10-4674.6.
}}
\author{Luis M. Brice\~{n}o-Arias$^{1}$, 
Patrick L. Combettes$^2$, 
and Francisco J. Silva$^3$
\\[4mm]
\small
\small $\!^1$Universidad T\'ecnica Federico Santa Mar\'ia,
Departamento de Matem\'atica, Santiago, Chile\\
\small{\email{luis.briceno@usm.cl}}\\[3mm]
\small $\!^2$North Carolina State University,
Department of Mathematics, Raleigh, NC 27695-8205, USA\\
\small{\email{plc@math.ncsu.edu}}\\[4mm]
\small $\!^3$Universit\'e de Limoges, Laboratoire XLIM, 
87060 Limoges, France\\
\small{\email{francisco.silva@unilim.fr}}
}

\bigskip
\date{~}
\maketitle

\begin{abstract}
A perspective function is a construction which combines a base
function defined on a given space with a nonlinear scaling function
defined on another space and which yields a lower semicontinuous
convex function on the product space. Since perspective functions
are typically nonsmooth, their use in first-order algorithms
necessitates the computation of their proximity operator. This
paper establishes closed-form expressions for the proximity
operator of a perspective function defined on a Hilbert space in
terms of a proximity operator involving its base function and one
involving its scaling function.
\end{abstract} 


\newpage
\section{Introduction}
\label{sec:1}

Throughout, $\HH$ and $\GG$ are real Hilbert spaces and
$\Gamma_0(\HH)$ is the class of proper lower semicontinuous convex
functions from $\HH$ to $\RX$. The focus of this paper is on the
following construction which combines a base function defined on a
given space with a nonlinear scaling function defined on another
space and which yields a lower semicontinuous convex function on
the product space (alternative constructions of nonlinearly scaled
perspective functions in certain settings have been studied in
\cite{Mar05a,Mar05b,Zali08}; see \cite{Vina23} for a discussion).

\begin{definition}{\rm\cite{Vina23}}
\label{d:1}
The \emph{preperspective} of a \emph{base} function
$\varphi\colon\HH\to\RXX$ with respect to a \emph{scaling} 
function $s\colon\GG\to\RXX$ is
\begin{equation}
\label{e:1}
\begin{array}{lcll}
\!\!\!\varphi\ppersp s\colon\!\!&\!\!\HH\times\GG&\!\!\to\!\!
&\RXX\\[2mm]
&\!\!(x,y)\!\!&\!\!\mapsto\!\!&
\begin{cases}
s(y)\varphi\biggl(\dfrac{x}{s(y)}\biggr),
&\text{if}\;\:0<s(y)<\pinf;\\
\pinf,&\text{if}\;\;\minf\leq s(y)\leq 0
\;\;\text{or}\;\;s(y)=\pinf,
\end{cases}
\end{array}
\end{equation}
and the \emph{perspective} of $\varphi$ with respect to
$s$ is the largest lower semicontinuous convex function 
$\varphi\persp s$ minorizing $\varphi\ppersp s$.
\end{definition}

If $\varphi\in\Gamma_0(\HH)$, $\GG=\RR$, and $s\colon y\mapsto y$
in Definition~\ref{d:1}, it follows from \cite[Theorem~3.E]{Rock66}
that $\varphi\persp s$ reduces to 
\begin{equation} 
\label{e:0}
\widetilde{\varphi}\colon \HH\times\RR\to\RX\colon (x,y)\mapsto
\begin{cases} 
y\varphi\biggl(\dfrac{x}{y}\biggr),&\text{if}\;\:y>0;\\
(\rec\varphi)(x),&\text{if}\;\:y=0;\\ 
\pinf,&\text{if}\;\:y<0,
\end{cases} 
\end{equation} 
where $\rec\varphi$ denotes the recession function of $\varphi$.
This construction, which features a linear scaling function,
corresponds to the classical notion of a perspective function. It
was first considered in \cite{Rock66} and its properties have been
further investigated in \cite{Svva18,Rock70}. On the application
side, an early occurrence of \eqref{e:0} is found in statistical
inference. In this context, a fundamental objective is the
estimation of both the location $x$ (i.e., the regression vector)
and the scale $y$ (e.g., the standard deviation of the noise or
some other parameter) of the statistical model from the data. In
robust statistics, the maximum likelihood-type estimator
(M-estimator) for location with concomitant scale
\cite[p.~179]{Hube81} couples both parameters via a convex
objective function which is precisely of the form \eqref{e:0}.
Other applications involving linearly scaled perspective functions
can be found in 
\cite{Arav18,Bren00,Jmaa18,Ejst20,Elgh18,Kuro22,Lamb16}.

In recent years, perspective functions with nonlinear scaling 
have appeared implicitly in several applications. Thus, 
in the classical dynamical formulation of optimal transport
problems~\cite{Bren00}, $x$ and $y$ represent, respectively, the
momentum and density variables, and the transport cost is
written in terms of the classical perspective function with linear
scaling \eqref{e:0}. In order to model more complex phenomena
nonlinear scaling functions have also been used. For instance, 
the transport cost used in \cite{Card13,Dolb09} involves the
nonlinear scaling function
\begin{equation}
\label{e:v}
s\colon\RR\to\RXM\colon y\mapsto 
\begin{cases}
y^q,&\text{if}\:\:y\geq 0;\\
\minf,&\text{if}\:\:y<0,
\end{cases}
\end{equation}
where $q\in\zeroun$. This modification makes it possible to take
into account congestion effects during the transport of an initial
distribution towards a target one. On the other hand, 
in multiphase optimal transport problems, the density 
is split in several components. For instance, in the case of two
phases, the model of \cite{Bren02} hinges on the scaling function 
\begin{equation}
\label{e:defsBP}
s\colon\RR\to\RXM\colon {y}\mapsto 
\begin{cases}
\left(\dfrac{\alpha}{y}+\dfrac{\beta}{1-y}\right)^{-1},
&\text{if}\:\:{y}\in\left]0,1\right[;\\[5mm]
\minf,&\text{otherwise},
\end{cases}
\end{equation}
where $\alpha$ and $\beta$ are strictly positive parameters 
representing the contribution of each phase to the kinetic energy 
of the system. In \cite{Carr10,Carr24}, the scaling function 
\begin{equation}
\label{e:jose}
s\colon\RR\to\RXM\colon {y}\mapsto 
\begin{cases}
y(1-y),
&\text{if}\:\:{y}\in[0,1];\\[5mm]
\minf,&\text{otherwise},
\end{cases}
\end{equation}
is employed to model the mobility of particles via a nonlinear function of the
density with saturation in order to avoid overcrowding;
this model arises in chemotaxis, phase
segregation, and thin liquid film problems. Definition~\ref{d:1}
can also be found in  
the family of scaling functions considered in \cite{Maas11}, which
includes the logarithmic mean 
\begin{equation}
\label{e:log_scale}
s\colon\RR^2\to\RXM\colon(y_1,y_2)\mapsto
\begin{cases} 0, & \text{if}\; 
(y_1,y_2)\in(\{0\}\times\RP)\cup(\RPP\times\{0\});\\
y_1, & \text{if}\; y_1=y_2\in\RPP;\\
\dfrac{y_2-y_1}{\log(y_2)-\log(y_1)}, & \text{if}\; 
(y_1,y_2)\in\RPP\times\RPP\;\text{and}\;
y_1\neq y_2;\\
\minf, & \text{otherwise,}
\end{cases} 
\end{equation}
as well as the geometric mean 
\begin{equation}
s\colon\RR^2\to\RXM\colon (y_1,y_2)\mapsto
\begin{cases} 
\sqrt{y_1y_2}, &\text{if}\; (y_1,y_2)\in\RP\times\RP; \\
\minf, & \text{otherwise}.
\end{cases}
\end{equation}
In \cite{Maas11}, these scaling functions are used to define
metrics on the space of probability measures on graphs and to
describe the associated gradient flows. Nonlinear scaling functions
also appear in mean field type control \cite{Ach16a,Ach16b},
machine learning \cite{Aval18}, physics \cite{Berc13}, operator
theory \cite{Carl19,Effr09}, mathematical programming
\cite{Imai88,Mare01}, information theory \cite{Lutw05,Berc21},
kinetic theory \cite{Car22b}, and economics \cite{Zell66}.

A key tool in Hilbertian convex analysis to study variational
problems and design solution algorithms for them is Moreau's
proximity operator \cite{Mor62b,More65}. Recall that, given 
$f\in\Gamma_0(\HH)$ and $x\in\HH$,
\begin{equation}
\label{e:p}
\prox_f\hspace{.2mm}x\;\text{is the unique minimizer over $\HH$
of the function}\;y\mapsto f(y)+\dfrac{1}{2}\|x-y\|^2.
\end{equation}
This process defines the proximity operator
$\prox_f\colon\HH\to\HH$ of $f$, which is extensively discussed
in \cite{Livre1}. From an algorithmic viewpoint, proximity
operators play a critical role in first order methods as they
constitute the main device to activate nonsmooth functions in
convex minimization problems; see
\cite{Livre1,Cham16,Acnu24,Cond23} and the references therein.
Since the nonlinearly scaled perspectives functions of
Definition~\ref{d:1} are typically nonsmooth, knowledge of their
proximity operator is required to solve the variational
formulations in which they are present. 
Formulas for the proximity operator of the 
classical perspective function $\widetilde{\varphi}$ of
\eqref{e:0} were derived in \cite{Jmaa18,Ejst20} and they have been
employed to solve minimization problems arising in areas such as
statistical biosciences \cite{Stat21},
information theory \cite{Elgh18},
signal recovery \cite{Kuro22},
and machine learning \cite{Yama22}. 
For nonlinear scales, an instance of a proximity operator of 
$\varphi\persp s$, where $\varphi=|\cdot|^2$ and $s$ as in
\eqref{e:jose}, is computed partially in \cite{Carr24}.

It is the objective of the present paper to derive closed-form
expressions for the proximity operator of the general nonlinearly
scaled perspective functions of Definition~\ref{d:1}. 
In turn, our results make minimization problems 
involving perspective functions with nonlinear scaling readily amenable
to powerful proximal splitting solution algorithms. The
closed-form expressions we shall obtain for 
$\prox_{\varphi\persp s}$ will be formulated in terms of a
proximity operator involving the base function $\varphi$ and one
involving its scaling function $s$. For instance, 
Example~\ref{ex:4} gives the formula for the proximity operator
of the perspective function of \cite{Card13,Dolb09}, which involves
the scaling function \eqref{e:v}. 

In Section~\ref{sec:2}, we define our notation and provide the
background necessary to our investigation. Section~\ref{sec:3} is
devoted to preliminary results. Closed-form expressions of
$\prox_{\varphi\persp s}$ are established in Section~\ref{sec:4}
and Examples are provided in Section~\ref{sec:5}.

\section{Notation and background}
\label{sec:2}

The scalar product of a Hilbert space is denoted by
$\scal{\cdot}{\cdot}$ and the associated norm by $\|\cdot\|$.
The closed ball with center $x\in\HH$ and radius $\rho\in\RPP$ is
denoted by $B(x;\rho)$. The Hilbert direct sum of $\HH$ and $\GG$
is denoted by $\HH\oplus\GG$. Let $f\colon\HH\to\RXX$. Then 
$\dom f=\menge{x\in\HH}{f(x)<\pinf}$ is the domain of $f$,
$\epi f=\menge{(x,\xi)\in\HH\times\RR}{f(x)\leq\xi}$ is the
epigraph of $f$, 
\begin{equation}
\label{e:c}
f^*\colon\HH\to\RXX\colon x^*\mapsto\sup_{x\in\HH}
\bigl(\scal{x}{x^*}-f(x)\bigr),
\end{equation}
is the conjugate of $f$, and $\partial f$ is the subdifferential of
$f$. We declare $f$ convex if $\epi f$ is
convex, lower semicontinuous if $\epi f$ is closed, and proper if
$\minf\notin f(\HH)\neq\{\pinf\}$. The recession of 
$f\in\Gamma_0(\HH)$ is 
\begin{equation}
\label{e:rec}
\rec f\colon\HH\to\RXX\colon x\mapsto\lim_{0<\lambda\to\pinf}
\dfrac{f(z+\lambda x)-f(z)}{\lambda},
\end{equation}
where $z\in\dom f$ is arbitrary.
Let $C$ be a subset of $\HH$.
Then $\iota_C$ is the indicator function of $C$ and 
$\sigma_C=\iota_C^*$ is
the support function of $C$; if $C$ is nonempty, closed, and
convex, then $\proj_C=\prox_{\iota_C}$ is the projection operator
onto $C$. See \cite{Livre1} for background on Hilbertian convex
analysis and \cite{Rock70} for the Euclidean setting. 

\begin{definition}
\label{d:hautbas}
Let $f\colon\HH\to\RXX$. Then 
\begin{equation}
\label{e:66}
(\forall\xi\in\RP)\quad\xi\rocky f=
\begin{cases}
\iota_{\cdom f},&\text{if}\;\:\xi=0;\\
\xi f,&\text{if}\;\:\xi>0.
\end{cases}
\end{equation} 
In addition,
\begin{equation}
\label{e:pbas}
\pbas{f}\colon\HH\to\RX\colon x\mapsto
\begin{cases}
f(x),&\text{if}\;\:\minf<f(x)<0;\\
\pinf,&\text{otherwise}
\end{cases}
\end{equation}
and the $\blacktriangledown$ envelope of $f$ is
$\bas{f}=f^{{\tiny\mbox{$\vee$}}**}$.
Furthermore, 
\begin{equation}
\label{e:phaut}
\phaut{f}\colon\HH\to\RX\colon x\mapsto
\begin{cases}
f(x),&\text{if}\;\:0<f(x)<\pinf;\\
\pinf,&\text{otherwise}
\end{cases}
\end{equation}
and the $\blacktriangleup$ envelope of $f$ is
$\haut{f}=f^{{\tiny\mbox{$\wedge$}}**}$.
\end{definition}

Let us record a few facts.

\begin{lemma}{\rm\cite[Proposition~13.15]{Livre1}}
\label{l:fy}
Let $f\colon\HH\to\RX$ be proper, let $x\in\HH$, and let
$x^*\in\HH$. Then $f(x)+f^*(x^*)\geq\scal{x}{x^*}$.
\end{lemma}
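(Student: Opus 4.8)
The plan is to read the inequality straight off the definition \eqref{e:c} of the conjugate, using the single point $x$ as a competitor in the supremum that defines $f^*(x^*)$. First I would record two consequences of properness of $f$. Since the range of $f$ is contained in $\RX$, we have $f(x)>\minf$; and since $f$ is not identically $\pinf$, there exists $z_0\in\dom f$, so that taking $z_0$ in the supremum defining $f^*(x^*)$ gives
\begin{equation}
f^*(x^*)\geq\scal{z_0}{x^*}-f(z_0)>\minf.
\end{equation}
Hence $f(x)$ and $f^*(x^*)$ both lie in $\RX$, and the sum $f(x)+f^*(x^*)$ is unambiguously defined in $\RX$.

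Next I would split into two cases. If $f(x)=\pinf$, then $f(x)+f^*(x^*)=\pinf\geq\scal{x}{x^*}$, and there is nothing more to prove. If instead $f(x)\in\RR$, then applying \eqref{e:c} with the specific point $x$ yields $f^*(x^*)\geq\scal{x}{x^*}-f(x)$, and adding the finite number $f(x)$ to both sides produces $f(x)+f^*(x^*)\geq\scal{x}{x^*}$, as claimed.

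No genuine obstacle arises here: the entire content is the defining property of the supremum in \eqref{e:c}. The only point that calls for a little care is the extended-real arithmetic, and this is precisely why properness of $f$ is invoked first, so as to guarantee that neither $f(x)$ nor $f^*(x^*)$ equals $\minf$, thereby avoiding the indeterminate expression $\pinf+(\minf)$ and keeping the inequality meaningful in every case.
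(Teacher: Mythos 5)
Your argument is correct and is precisely the standard proof of the Fenchel--Young inequality given in the cited reference \cite[Proposition~13.15]{Livre1}: the paper itself supplies no proof beyond the citation, and the content is exactly reading the inequality off the supremum in \eqref{e:c} at the competitor $x$. Your care with the extended-real arithmetic (using properness to rule out $f(x)=\minf$ and $f^*(x^*)=\minf$, and treating the case $f(x)=\pinf$ separately) is exactly the right point to attend to, so nothing is missing.
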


\begin{lemma}{\rm\cite[Theorem~3E]{Rock66}}
\label{l:66}
Let $f\in\Gamma_0(\HH)$ and $\gamma\in\RP$. Then the following
hold:
\begin{enumerate}
\item
\label{l:66i}
$\gamma\rocky f\in\Gamma_0(\HH)$.
\item
\label{l:66ii}
$[\widetilde{f}(\cdot,\gamma)]^*=\gamma\rocky f^*$ and
$(\gamma\rocky f)^*=\widetilde{f^*}(\cdot,\gamma)$. 
\end{enumerate}
\end{lemma}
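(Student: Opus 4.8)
The plan is to split the verification according to whether $\gamma\in\RPP$ or $\gamma=0$, which matches the case distinctions built into the definition \eqref{e:66} of $\gamma\rocky f$ and the definition \eqref{e:0} of $\widetilde{f}$. When $\gamma>0$ both assertions follow from a linear change of variables in the conjugate; when $\gamma=0$ they reduce to the classical duality between the recession function of a member of $\Gamma_0(\HH)$ and the support function of the domain of its conjugate, and this is the only place where real work is needed.

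First I would dispose of the case $\gamma>0$. Here $\gamma\rocky f=\gamma f\in\Gamma_0(\HH)$ because $f\in\Gamma_0(\HH)$ and $\gamma>0$, which is \ref{l:66i}. For \ref{l:66ii}, note that $\widetilde{f}(\cdot,\gamma)=\gamma f(\cdot/\gamma)$; substituting $u=x/\gamma$ in \eqref{e:c} turns $[\widetilde{f}(\cdot,\gamma)]^*(x^*)=\sup_{x\in\HH}\big(\scal{x}{x^*}-\gamma f(x/\gamma)\big)$ into $\gamma\sup_{u\in\HH}\big(\scal{u}{x^*}-f(u)\big)=\gamma f^*(x^*)=(\gamma\rocky f^*)(x^*)$, while factoring $\gamma$ out of $(\gamma\rocky f)^*(x^*)=\sup_{x\in\HH}\big(\scal{x}{x^*}-\gamma f(x)\big)$ gives $\gamma f^*(x^*/\gamma)=\widetilde{f^*}(x^*,\gamma)$; the second identity can equivalently be obtained by conjugating the first and using $f^{**}=f$.

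Next, the case $\gamma=0$, where $\gamma\rocky f=\iota_{\cdom f}$. Since $f$ is proper and convex, $\dom f$ is a nonempty convex set, so $\cdom f$ is nonempty, closed, and convex and $\iota_{\cdom f}\in\Gamma_0(\HH)$, which is \ref{l:66i}. For \ref{l:66ii}, I would appeal to the standard fact that every $g\in\Gamma_0(\HH)$ satisfies $\rec g=\sigma_{\dom g^*}$ — hence, taking conjugates and using convexity of $\dom g^*$, $(\rec g)^*=\iota_{\cdom g^*}$. Applying this with $g=f$ gives $[\widetilde{f}(\cdot,0)]^*=(\rec f)^*=\iota_{\cdom f^*}=0\rocky f^*$; applying it with $g=f^*$, together with $f^{**}=f$ and $\iota_{\cdom f}^*=\sigma_{\cdom f}=\sigma_{\dom f}$, gives $\widetilde{f^*}(\cdot,0)=\rec f^*=\sigma_{\dom f^{**}}=\sigma_{\dom f}=(0\rocky f)^*$.

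The hard part is thus the recession--support duality $\rec g=\sigma_{\dom g^*}$ in the Hilbert setting; I would either cite it (\cite{Livre1}, or \cite{Rock70} in the Euclidean case) or reconstruct it. One direction, $\rec g\geq\sigma_{\dom g^*}$, is immediate from Fenchel--Young (Lemma~\ref{l:fy}): for $z\in\dom g$, $x\in\HH$, $x^*\in\dom g^*$, and $\lambda>0$, the difference quotient in \eqref{e:rec} satisfies $\lambda^{-1}\big(g(z+\lambda x)-g(z)\big)\geq\scal{x}{x^*}+\lambda^{-1}\big(\scal{z}{x^*}-g^*(x^*)-g(z)\big)$, whose last term is $\leq 0$ by Lemma~\ref{l:fy}, so the right-hand side tends to $\scal{x}{x^*}$ as $\lambda\to\pinf$; the reverse inequality uses the representation of $g=g^{**}$ as the supremum of its continuous affine minorants $\scal{\cdot}{x^*}-g^*(x^*)$, $x^*\in\dom g^*$, together with the monotonicity of those difference quotients in $\lambda$. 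The computations above also rely on the elementary facts that, for $f\in\Gamma_0(\HH)$, $\cdom f\neq\emp$ and $\rec f$ is proper (it vanishes at $0$ and, by Lemma~\ref{l:fy}, $\rec f\geq\scal{\cdot}{x^*}$ for any $x^*\in\dom f^*\neq\emp$), so that forming $\iota_{\cdom f}$ and $(\rec f)^*$ in \ref{l:66ii} is legitimate.
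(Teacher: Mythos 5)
Your proof is correct. Note, though, that the paper does not prove this lemma at all: it is stated with the citation \cite{Rock66} and used as a known fact, so there is no in-paper argument to compare against. Your reconstruction is the standard one — the case $\gamma>0$ by the change of variables $u=x/\gamma$ in the conjugate, and the case $\gamma=0$ by reducing both identities to the recession--support duality $\rec g=\sigma_{\dom g^*}$ for $g\in\Gamma_0(\HH)$ (which is itself \cite[Proposition~13.49]{Livre1}, already invoked elsewhere in the paper in the form $\rec\base{(-s)}=\sigma_{\dom\bas{(-s)}}$, so you could simply cite it rather than re-derive it). All the auxiliary verifications — properness of $\rec f$, convexity of $\dom f^*$ so that $\sigma_{\dom f^*}^*=\iota_{\cdom f^*}$, and $\sigma_{\cdom f}=\sigma_{\dom f}$ — are handled correctly.
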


\begin{lemma}{\rm\cite[Lemma~3.2]{Vina23}}
\label{l:12}
Let $f\in\Gamma_0(\HH)$ be such that $f^{-1}(\RMM)\neq\emp$.
Then the following hold:
\begin{enumerate}
\item
\label{l:12i} 
$\bas{f}\in\Gamma_{0}(\HH)$.
\item
\label{l:12ii} 
$\dom\bas{f}=\overline{f^{-1}(\RMM)}=f^{-1}(\RM)$.
\item
\label{l:12iii} 
Let $x\in\HH$ be such that $f(x)\in\RM$. Then $\bas{f}(x)=f(x)$.
\end{enumerate}
\end{lemma}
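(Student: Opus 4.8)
The plan is to establish the explicit identity $\bas{f}=f+\iota_{f^{-1}(\RM)}$, from which all three assertions follow at once. Write $C=f^{-1}(\RM)$ and $h=f+\iota_C$. Since $f\in\Gamma_0(\HH)$, the set $C$ is a closed convex sublevel set of $f$, it is nonempty because it contains $f^{-1}(\RMM)\neq\emp$, and $C\subseteq\dom f$; hence $h$ is proper, lower semicontinuous, and convex, that is, $h\in\Gamma_0(\HH)$. Before comparing $h$ with $\bas{f}$, I would first note that $\pbas{f}$ is proper (its domain is $f^{-1}(\RMM)\neq\emp$) and that, since functions in $\Gamma_0(\HH)$ never take the value $\minf$, \eqref{e:pbas} yields $\pbas{f}\geq f$ pointwise; therefore every continuous affine minorant of $f$ minorizes $\pbas{f}$, so $\bas{f}=(\pbas{f})^{**}$ is a genuine element of $\Gamma_0(\HH)$ and satisfies $\bas{f}\leq\pbas{f}$.

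The core of the proof is the double inequality $h\leq\bas{f}\leq h$. For $h\leq\bas{f}$: inspection of \eqref{e:pbas} gives $h\leq\pbas{f}$, since $h$ and $\pbas{f}$ coincide on $f^{-1}(\RMM)$, while $h=0<\pinf=\pbas{f}$ on $f^{-1}(\{0\})$ and both equal $\pinf$ off $C$; conjugating reverses order, so $h^*\geq(\pbas{f})^*$, and conjugating again together with $h\in\Gamma_0(\HH)$ gives $h=h^{**}\leq(\pbas{f})^{**}=\bas{f}$. For $\bas{f}\leq h$: since $\bas{f}\leq\pbas{f}$, it suffices to prove $\bas{f}\leq h$ on $C$, and on $f^{-1}(\RMM)$ this is clear from $\bas{f}\leq\pbas{f}=f=h$. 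The remaining case $f(x)=0$ is the one delicate point: choosing $z\in f^{-1}(\RMM)$ and setting $x_n=(1-1/n)x+(1/n)z$ for every integer $n\geq 1$, convexity of $f$ gives $f(x_n)\leq(1/n)f(z)<0$, hence $\bas{f}(x_n)\leq\pbas{f}(x_n)=f(x_n)\leq(1/n)f(z)\to0$; since $x_n\to x$ and $\bas{f}$ is lower semicontinuous, $\bas{f}(x)\leq\liminf_n\bas{f}(x_n)\leq 0=h(x)$.

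Having obtained $\bas{f}=h=f+\iota_{f^{-1}(\RM)}$, the three assertions follow. Assertion \ref{l:12i} is the fact $h\in\Gamma_0(\HH)$ recorded at the outset. Assertion \ref{l:12iii} is the equality $\bas{f}(x)=h(x)=f(x)$, valid whenever $f(x)\in\RM$. For assertion \ref{l:12ii}, $\dom\bas{f}=\dom h=C=f^{-1}(\RM)$, while $\overline{f^{-1}(\RMM)}=f^{-1}(\RM)$ holds because $f^{-1}(\RM)$ is closed and contains $f^{-1}(\RMM)$, and conversely, for any $x\in f^{-1}(\RM)$, the sequence $x_n=(1-1/n)x+(1/n)z$ (with $f(z)<0$) lies in $f^{-1}(\RMM)$ and converges to $x$. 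The step I expect to require genuine care is the control of $\bas{f}$ on $f^{-1}(\{0\})$, that is, on the relative boundary of $\{f\leq 0\}$, where $\pbas{f}$ jumps to $\pinf$; the segment-approximation together with lower semicontinuity of the biconjugate is what makes this go through.
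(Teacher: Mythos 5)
Your proof is correct. Note that the paper itself offers no argument for this lemma: it is imported verbatim from \cite{Vina23}, so there is no in-text proof to compare against. Your route -- establishing the closed form $\bas{f}=f+\iota_{f^{-1}(\RM)}$ and reading off all three assertions from it -- is a clean way to package the result, and the one genuinely delicate point (showing $\bas{f}(x)\leq 0$ on $f^{-1}(\{0\})$, where $\pbas{f}$ jumps to $\pinf$) is handled correctly by the segment approximation $x_n=(1-1/n)x+(1/n)z$ combined with the lower semicontinuity of the biconjugate; the convexity inequality $f(x_n)\leq(1/n)f(z)<0$ is exactly what keeps $x_n$ inside $\dom\pbas{f}$. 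The remaining steps (properness of $\pbas{f}$, the order reversal under conjugation giving $h=h^{**}\leq\bas{f}\leq\pbas{f}$, and the density of $f^{-1}(\RMM)$ in $f^{-1}(\RM)$ by the same segment argument) are all sound.
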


\begin{lemma}
\label{l:24}
Let $f\in\Gamma_0(\HH)$ be such that $f^{-1}(\RPP)\neq\emp$. 
Then the following hold:
\begin{enumerate}
\item
\label{l:24i}
$\haut{f}\in\Gamma_0(\HH)$.
\item
\label{l:24ii}
$\dom\haut{f}=\dom f\cap\cconv f^{-1}(\RPP)$.
\item
\label{l:24iii}
$\haut{f}(\dom\haut{f})\subset\RP$.
\item
\label{l:24iv}
Let $x\in\HH$ be such that $f(x)\in\RPP$. Then $\haut{f}(x)=f(x)$.
\item
\label{l:24v}
$\cdom\haut{f}=\cconv f^{-1}(\RPP)$.
\end{enumerate}
\end{lemma}
\begin{proof}
\ref{l:24i}--\ref{l:24iv}: See \cite[Lemma~3.3]{Vina23}.

\ref{l:24v}: 
By \ref{l:24iv}, $f^{-1}(\RPP)\subset\dom\haut{f}$. Hence, 
since $\haut{f}$ is convex by \ref{l:24i}, 
$\dom\haut{f}$ is convex, and therefore
$\cconv f^{-1}(\RPP)\subset\cdom \haut{f}$.
The assertion therefore follows from \ref{l:24ii}.
\end{proof}

\section{Preliminary results}
\label{sec:3}

We establish results on which the derivations of
Section~\ref{sec:4} will rest.

\begin{lemma}
\label{l:j1}
Let $f\in\Gamma_0(\HH)$, $x\in\HH$, $p\in\HH$, and $\gamma\in\RP$.
Then the following hold:
\begin{enumerate}
\item
\label{l:j1-i}
$\prox_{\gamma\rocky f}=
\begin{cases}
\proj_{\cdom f}, &\text{if}\;\;\gamma=0;\\
\prox_{\gamma f}, &\text{if}\;\;\gamma\in\RPP.
\end{cases}$
\item
\label{l:j1-+i}
$\range\prox_{\gamma\rocky f}\subset
\dom(\gamma\rocky f)\subset\cdom f$.
\item
\label{l:j1iii}
$p=\prox_{\gamma\rocky f}x$ $\Leftrightarrow$ $(\forall y\in\HH)$ 
$\scal{y-p}{x-p}+(\gamma\rocky f)(p)\leq(\gamma\rocky f)(y)$.
\item
\label{l:j1i}
$p=\prox_{\gamma\rocky f}x$ $\Leftrightarrow$ 
$(\gamma\rocky f)(p)+(\gamma\rocky f)^{*}(x-p)=\scal{p}{x-p}$.
\item
\label{l:j1ii-}
Suppose that $\gamma>0$. Then
$p=\prox_{\gamma f}x$ $\Leftrightarrow$ 
$f(p)+f^{*}((x-p)/\gamma)=\scal{p}{x-p}/\gamma$.
\item
\label{l:j1ii}
Suppose that $\gamma>0$. Then
$x=\prox_{\gamma f}x+\gamma\,\prox_{\fetoile/\gamma}(x/\gamma)$.
\end{enumerate}
\end{lemma}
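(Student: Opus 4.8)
The plan is to dispatch the six items in order, in each case reducing to the definition~\eqref{e:66} of $\gamma\rocky f$, to Lemma~\ref{l:66}\ref{l:66i}, and to standard facts about proximity operators from \cite{Livre1}. For~\ref{l:j1-i}, when $\gamma=0$ we have $\gamma\rocky f=\iota_{\cdom f}$ by~\eqref{e:66}; since $f$ is proper, lower semicontinuous, and convex, $\cdom f$ is a nonempty closed convex set, hence $\prox_{\gamma\rocky f}=\prox_{\iota_{\cdom f}}=\proj_{\cdom f}$. When $\gamma\in\RPP$, \eqref{e:66} gives $\gamma\rocky f=\gamma f$ and there is nothing to prove. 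In both cases $\gamma\rocky f\in\Gamma_0(\HH)$ by Lemma~\ref{l:66}\ref{l:66i}, so the proximity operators above are well defined. This also yields~\ref{l:j1-+i}: combining the general inclusion $\ran\prox_g\subset\dom g$ (valid for every $g\in\Gamma_0(\HH)$) with the identity $\dom(\gamma\rocky f)=\cdom f$ if $\gamma=0$, respectively $\dom(\gamma\rocky f)=\dom f\subset\cdom f$ if $\gamma>0$, read off from~\eqref{e:66}.

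Items~\ref{l:j1iii} and~\ref{l:j1i} are two equivalent reformulations of the defining minimality property~\eqref{e:p}, applied to $g=\gamma\rocky f\in\Gamma_0(\HH)$: the characterization $p=\prox_g x\Leftrightarrow x-p\in\partial g(p)$ becomes, by the very definition of the subdifferential, the inequality in~\ref{l:j1iii}, and, by the Fenchel--Young equality (which refines Lemma~\ref{l:fy} by characterizing $\partial g$), the identity $(\gamma\rocky f)(p)+(\gamma\rocky f)^*(x-p)=\scal{p}{x-p}$ of~\ref{l:j1i}. For~\ref{l:j1ii-}, I would specialize~\ref{l:j1i} to $\gamma>0$, so that $\gamma\rocky f=\gamma f$ and $(\gamma f)^*=\gamma f^*(\cdot/\gamma)$ (a direct computation from~\eqref{e:c}, or Lemma~\ref{l:66}\ref{l:66ii} with a linear scaling), whence $\gamma f(p)+\gamma f^*((x-p)/\gamma)=\scal{p}{x-p}$; dividing by $\gamma$ gives the claim.

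Finally, for~\ref{l:j1ii} I would invoke Moreau's decomposition $\prox_g+\prox_{g^*}=\Id$ with $g=\gamma f\in\Gamma_0(\HH)$, which gives $x=\prox_{\gamma f}x+\prox_{(\gamma f)^*}x$. Since $(\gamma f)^*=\gamma f^*(\cdot/\gamma)$, performing the substitution $u=\gamma v$ in the minimization problem~\eqref{e:p} that defines $\prox_{(\gamma f)^*}x$ reveals its minimizer to be $\gamma$ times the minimizer of $v\mapsto f^*(v)+(\gamma/2)\|v-x/\gamma\|^2$, that is, $\prox_{(\gamma f)^*}x=\gamma\prox_{\fetoile/\gamma}(x/\gamma)$; the stated identity follows.

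None of these steps is genuinely hard; the points deserving a little care are verifying that $\cdom f$ is nonempty, closed, and convex so that the projection in~\ref{l:j1-i} is meaningful, keeping the degenerate case $\gamma=0$ separate throughout~\ref{l:j1-i}--\ref{l:j1i}, and tracking the scaling constants correctly in the changes of variable behind~\ref{l:j1ii-} and~\ref{l:j1ii}.
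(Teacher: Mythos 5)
Your proposal is correct and follows essentially the same route as the paper: items \ref{l:j1-i}--\ref{l:j1iii} from the definition \eqref{e:66} and the standard variational characterization, item \ref{l:j1i} from Fenchel--Young, item \ref{l:j1ii-} from the scaling rule for conjugates, and item \ref{l:j1ii} from Moreau's decomposition (which the paper simply cites as \cite[Proposition~14.3(ii)]{Livre1} while you rederive it by a change of variables). The only cosmetic difference is in \ref{l:j1i}, where you invoke the subdifferential--Fenchel--Young-equality characterization directly, whereas the paper reassembles it from \ref{l:j1iii} together with the supremum formula for the conjugate; both arguments are standard and equivalent.
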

\begin{proof}
Recall from Lemma~\ref{l:66}\ref{l:66i} that 
$\gamma\rocky f\in\Gamma_0(\HH)$.

\ref{l:j1-i}: This follows from \eqref{e:66}.

\ref{l:j1-+i}: This follows from \eqref{e:p} 
and \eqref{e:66}.

\ref{l:j1iii}: In view of \eqref{e:66}, for $\gamma=0$, this is the
characterization of the projection of $x$ onto the nonempty closed
convex set $\cdom f$ \cite[Theorem~3.16]{Livre1} while, for
$\gamma>0$, this is \cite[Proposition~12.26]{Livre1}.

\ref{l:j1i}: 
By virtue of \ref{l:j1-+i},
$\dom(\gamma\rocky f)\subset\cdom f$.
Hence, Lemma~\ref{l:fy} and \eqref{e:c} yield
\begin{equation}
\label{e:o1}
\scal{p}{x-p}
\leq(\gamma\rocky f)(p)+(\gamma\rocky f)^*(x-p)
=\sup_{y\in\cdom f}\bigl(\scal{y}{x-p}
+(\gamma\rocky f)(p)-(\gamma\rocky f)(y)\bigr).
\end{equation}
On the other hand, we derive from \ref{l:j1iii} that
\begin{equation}
\label{e:o2}
p=\prox_{\gamma\rocky f}x\quad\Leftrightarrow\quad
\sup_{y\in\cdom f}\bigl(\scal{y}{x-p}
+(\gamma\rocky f)(p)-(\gamma\rocky f)(y)\bigr)
\leq\scal{p}{x-p}.
\end{equation}
Combining \eqref{e:o1} and \eqref{e:o2} furnishes the desired
characterization.

\ref{l:j1ii-}: This follows from \ref{l:j1i} and 
Lemma~\ref{l:66}\ref{l:66ii}.

\ref{l:j1ii}: See \cite[Proposition~14.3{\rm(ii)}]{Livre1}.
\end{proof}

\begin{lemma}
\label{l:5}
Let $\gamma\in\RP$, let $\phi\in\Gamma_0(\RR)$ be even and
such that $0\in\inte\dom\phi$, set
$\varphi=\phi\circ\|\cdot\|$, and let $x\in\HH$. Then
$\varphi\in\Gamma_0(\HH)$ and the following hold:
\begin{enumerate}
\item 
\label{l:5ii}
$\prox_{\gamma\rocky \varphi}x=
\begin{cases}
\dfrac{\prox_{\gamma\rocky\phi}\|x\|}{\|x\|}x,
&\text{if}\:\:x\neq 0;\\
0,&\text{if}\:\:x=0.
\end{cases}$
\item 
\label{l:5iii}
$\varphi(\prox_{\gamma\rocky \varphi}x)=
\phi(\prox_{\gamma\rocky\phi}\|x\|)$.
\end{enumerate}
\end{lemma}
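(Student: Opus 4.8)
The plan is to reduce both assertions to a one-dimensional computation, via the identity $\gamma\rocky\varphi=(\gamma\rocky\phi)\circ\|\cdot\|$ together with a general formula for the proximity operator of the composition of an \emph{even} function in $\Gamma_0(\RR)$ with the norm.

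First I would record that $\varphi\in\Gamma_0(\HH)$. Since $\phi$ is even and convex it is nondecreasing on $\RP$ (for $0\le s\le t$ with $t\in\dom\phi$, write $s$ as a convex combination of $-t$ and $t$ and use $\phi(-t)=\phi(t)$), and $0\in\inte\dom\phi$ gives $\phi(0)\in\RR$, so $\phi\ge\phi(0)>\minf$; hence $\varphi=\phi\circ\|\cdot\|$ is the composition of the continuous convex function $\|\cdot\|$ with a function in $\Gamma_0(\RR)$ nondecreasing on $\ran\|\cdot\|=\RP$, whence $\varphi\in\Gamma_0(\HH)$, with $\varphi(x)=\phi(\|x\|)$ for every $x\in\HH$. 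Now set $g=\gamma\rocky\phi$. By Lemma~\ref{l:66}\ref{l:66i}, $g\in\Gamma_0(\RR)$; it is even, since $g=\gamma\phi$ when $\gamma>0$ while $g=\iota_{\cdom\phi}$ when $\gamma=0$ and $\cdom\phi$ is symmetric because $\phi$ is; and $0\in\dom g$. I claim $\gamma\rocky\varphi=g\circ\|\cdot\|$. For $\gamma>0$ this is just $(\gamma\varphi)(x)=\gamma\phi(\|x\|)$. For $\gamma=0$ it amounts to $\cdom\varphi=\menge{x\in\HH}{\|x\|\in\cdom\phi}$: as $\phi$ is even with $0\in\inte\dom\phi$, $\dom\phi$ is a symmetric interval that is a neighbourhood of $0$, hence $\dom\phi\in\{\left]-\rho,\rho\right[,\left[-\rho,\rho\right]\}$ for some $\rho\in\RPPX$, so $\dom\varphi$ is the open or closed ball of center $0$ and radius $\rho$ and $\cdom\varphi=B(0;\rho)=\menge{x\in\HH}{\|x\|\in\cdom\phi}$.

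The heart of the proof is to establish, for an arbitrary even $g\in\Gamma_0(\RR)$ and any $x\in\HH$,
\[
\prox_{g\circ\|\cdot\|}x=
\begin{cases}
(\prox_g\|x\|/\|x\|)\,x,&\text{if }x\neq0;\\
0,&\text{if }x=0.
\end{cases}
\]
Since $g$ is even and $\|x\|\ge0$, the scalar $\prox_g\|x\|$ is nonnegative: if it were negative, replacing it by its absolute value would leave $g$ unchanged and strictly decrease the quadratic term in \eqref{e:p}, contradicting uniqueness of the minimizer. The case $x=0$ follows at once, as $y\mapsto g(\|y\|)+\tfrac12\|y\|^2\ge g(0)=g(\|0\|)+\tfrac12\|0\|^2$ with equality only at $y=0$ (using that $g$ is nondecreasing on $\RP$). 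For $x\neq0$, let $p=\prox_{g\circ\|\cdot\|}x$ and write $p=\alpha x/\|x\|+q$ with $\scal{q}{x}=0$. Comparing $p$ with $p'=|\alpha|x/\|x\|$ and using $g(\|p'\|)=g(|\alpha|)\le g(\|p\|)$ (monotonicity of $g$ on $\RP$) together with $\|x-p'\|^2=(\|x\|-|\alpha|)^2\le(\|x\|-\alpha)^2+\|q\|^2=\|x-p\|^2$, the objective of \eqref{e:p} does not increase at $p'$; by uniqueness, $p=\beta x$ for some $\beta\ge0$. Substituting, $\beta\|x\|$ minimizes $t\mapsto g(t)+\tfrac12(\|x\|-t)^2$ over $\RP$; since the unconstrained minimizer of this function over $\RR$ is $\prox_g\|x\|\ge0$, it follows that $\beta\|x\|=\prox_g\|x\|$, i.e. $p=(\prox_g\|x\|/\|x\|)x$.

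Applying this with $g=\gamma\rocky\phi$ and recalling $\gamma\rocky\varphi=g\circ\|\cdot\|$ yields \ref{l:5ii}. For \ref{l:5iii}, $\varphi(\prox_{\gamma\rocky\varphi}x)=\phi(\|\prox_{\gamma\rocky\varphi}x\|)$ because $\varphi=\phi\circ\|\cdot\|$, while \ref{l:5ii} together with $\prox_{\gamma\rocky\phi}\|x\|\ge0$ gives $\|\prox_{\gamma\rocky\varphi}x\|=\prox_{\gamma\rocky\phi}\|x\|$ (also when $x=0$, both sides being $0$), whence $\varphi(\prox_{\gamma\rocky\varphi}x)=\phi(\prox_{\gamma\rocky\phi}\|x\|)$. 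I expect the main obstacle to be the bookkeeping in the core step — the orthogonal-decomposition comparison and the repeated appeals to evenness and monotonicity of $g$ on $\RP$ — while keeping the projection case $\gamma=0$ (where $g\circ\|\cdot\|=\iota_{B(0;\rho)}$) and the degenerate point $x=0$ under control throughout.
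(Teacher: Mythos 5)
Your proof is correct, but it takes a genuinely different route from the paper's. The paper splits into $\gamma=0$ and $\gamma>0$: for $\gamma=0$ it identifies $\cdom\varphi$ as a ball and invokes the explicit ball-projection formula, while for $\gamma>0$ it cites an external result (\cite[Proposition~2.1]{Nmtm09}) giving $\prox_{\gamma(\phi\circ\|\cdot\|)}$ with a case distinction on whether $\|x\|\leq\gamma\tau$, where $\partial\phi(0)=[-\tau,\tau]$, and then shows this threshold condition is equivalent to $\prox_{\gamma\phi}\|x\|=0$ so that the two branches merge into the stated formula. You instead prove the radial reduction from scratch for an arbitrary even $g\in\Gamma_0(\RR)$, via the orthogonal decomposition $p=\alpha x/\|x\|+q$ and a comparison with $|\alpha|x/\|x\|$, which treats $\gamma=0$ and $\gamma>0$ uniformly through $g=\gamma\rocky\phi$; this buys self-containedness and avoids both the subdifferential computation at $0$ and the citation, at the cost of redoing a known composition formula. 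One small imprecision: in arguing $\prox_g\xi\geq 0$ you say replacing a negative minimizer by its absolute value \emph{strictly} decreases the quadratic term, which holds only when $\xi>0$; for $\xi=0$ the term is unchanged and the contradiction comes from uniqueness alone — but since you only invoke this claim when $\xi=\|x\|>0$ (the case $x=0$ being handled directly), nothing breaks.
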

\begin{proof}
Since \ref{l:5iii} follows from \ref{l:5ii}, we prove the latter.
We have
$\varphi\in\Gamma_0(\HH)$. In addition, by
\cite[Propositions~16.17(ii) and 16.27]{Livre1},
$\partial\phi(0)$ is a symmetric compact interval, say
$\partial\phi(0)=[-\tau,\tau]$, where $\tau\in\RP$.
We also note that there exists $\rho\in\RPPX$ 
such that
\begin{equation}
\label{e:cabron}
\cdom\phi=
\begin{cases}
[-\rho,\rho],&\text{if}\;\;\rho<\pinf;\\
\RR,&\text{if}\;\;\rho=\pinf
\end{cases}
\quad\text{and}\quad
\cdom\varphi=
\begin{cases}
B(0;\rho), &\text{if}\;\;\rho<\pinf;\\
\HH,&\text{if}\;\;\rho=\pinf.
\end{cases}
\end{equation}
If $\rho<+\infty$, we derive from \eqref{e:cabron} and 
\cite[Example~3.18]{Livre1} that
\begin{equation}
\label{e:nmtmabc}
\proj_{\cdom\varphi}x=
\dfrac{\rho x}{\max\{\|x\|,\rho\}}=
\begin{cases}
\dfrac{\rho\|x\|}{\max\{\|x\|,\rho\}\|x\|}x,
&\text{if}\:\:x\neq 0;\\
0,&\text{if}\:\:x=0
\end{cases}
=
\begin{cases}
\dfrac{\proj_{\cdom\phi}\|x\|}{\|x\|}x,
&\text{if}\:\:x\neq 0;\\
0,&\text{if}\:\:x=0
\end{cases}
\end{equation}
whereas, if $\rho=+\infty$, it is clear that 
$\proj_{\cdom\varphi}x$ coincides with the last term above.
In view of Lemma~\ref{l:j1}\ref{l:j1-i}, this establishes the
claim for $\gamma=0$. Now 
suppose that $\gamma>0$. Then it follows from 
\cite[Proposition~2.1]{Nmtm09} that 
\begin{equation}
\label{e:nmtma}
\prox_{\gamma\varphi}x=
\begin{cases}
\dfrac{\prox_{\gamma\phi}\|x\|}{\|x\|}x,
&\text{if}\:\:\|x\|>\gamma\tau;\\
0,&\text{if}\:\:\|x\|\leq\gamma\tau.
\end{cases}
\end{equation}
Moreover, since, in view of \eqref{e:p}, $\|x\|\leq\gamma\tau$ 
$\Leftrightarrow$ 
$\|x\|\in\gamma\partial\phi(0)$ 
$\Leftrightarrow$ 
$\prox_{\gamma\phi}\|x\|=0$, 
\eqref{e:nmtma} reduces to 
\begin{equation}
\label{e:nmtmab}
\prox_{\gamma\varphi}x=
\begin{cases}
\dfrac{\prox_{\gamma\phi}\|x\|}{\|x\|}x,
&\text{if}\:\:x\neq 0;\\
0,&\text{if}\:\:x=0,
\end{cases}
\end{equation}
as required.
\end{proof}

\begin{lemma}
\label{l:j2}
Let $f\in\Gamma_0(\HH)$, let $x\in\HH$, and set 
$\phi\colon\RP\to\RX\colon\gamma\mapsto f(\prox_{\gamma\odot f}x)$.
Then the following hold:
\begin{enumerate}
\item
\label{l:j2i-}
Let $\mu\in\RP$ and $\gamma\in\left]\mu,\pinf\right[$. Then
$\phi(\gamma)\leq\phi(\mu)-
\|\prox_{\mu\rocky f}x-\prox_{\gamma f}x\|^2/(\gamma-\mu)$.
\item
\label{l:j2i}
$\phi$ is decreasing on $\RP$.
\item
\label{l:j2iii}
$\phi$ is continuous.
\end{enumerate}
\end{lemma}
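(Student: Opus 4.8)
The plan is to establish the three items in order, using item~\ref{l:j2i-} as the engine for the other two. First I would prove \ref{l:j2i-}. Fix $\mu\in\RP$ and $\gamma>\mu$, and write $p_\mu=\prox_{\mu\rocky f}x$ and $p_\gamma=\prox_{\gamma f}x=\prox_{\gamma\rocky f}x$ (the identification is Lemma~\ref{l:j1}\ref{l:j1-i}). The idea is to test the variational inequality of Lemma~\ref{l:j1}\ref{l:j1iii} at $\gamma$ against the point $y=p_\mu$, and the one at $\mu$ against $y=p_\gamma$. From the first, $\scal{p_\mu-p_\gamma}{x-p_\gamma}+(\gamma\rocky f)(p_\gamma)\leq(\gamma\rocky f)(p_\mu)=\gamma f(p_\mu)$ (note $p_\mu\in\cdom f$ by Lemma~\ref{l:j1}\ref{l:j1-+i}, so $(\gamma\rocky f)(p_\mu)=\gamma f(p_\mu)$ whether $\gamma>0$ always holds here since $\gamma>\mu\geq0$). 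From the second, $\scal{p_\gamma-p_\mu}{x-p_\mu}+(\mu\rocky f)(p_\mu)\leq(\mu\rocky f)(p_\gamma)$. The subtle point is handling $\mu=0$, where $(\mu\rocky f)=\iota_{\cdom f}$: then $(\mu\rocky f)(p_\mu)=0$ and $(\mu\rocky f)(p_\gamma)=0$ as well (again since $p_\gamma\in\cdom f$), so the second inequality simply reads $\scal{p_\gamma-p_\mu}{x-p_\mu}\leq0$; for $\mu>0$ it reads $\scal{p_\gamma-p_\mu}{x-p_\mu}+\mu f(p_\mu)\leq\mu f(p_\gamma)$. In either case one can write it as $\scal{p_\gamma-p_\mu}{x-p_\mu}\leq\mu\big(f(p_\gamma)-f(p_\mu)\big)$, valid with the convention that when $\mu=0$ the right side is $0$ (and $f(p_\gamma)-f(p_\mu)$ is finite since both lie in $\dom f$). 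Adding the two inequalities, the cross terms telescope: $\scal{p_\mu-p_\gamma}{(x-p_\gamma)-(x-p_\mu)}=-\|p_\mu-p_\gamma\|^2$, giving $-\|p_\mu-p_\gamma\|^2+\gamma f(p_\gamma)\leq\gamma f(p_\mu)+\mu\big(f(p_\gamma)-f(p_\mu)\big)$, i.e. $(\gamma-\mu)\big(f(p_\gamma)-f(p_\mu)\big)\leq\|p_\mu-p_\gamma\|^2\cdot(-1)+\cdots$—rearranging, $(\gamma-\mu)f(p_\gamma)\leq(\gamma-\mu)f(p_\mu)-\|p_\mu-p_\gamma\|^2$, which is exactly $\phi(\gamma)\leq\phi(\mu)-\|p_\mu-p_\gamma\|^2/(\gamma-\mu)$ after dividing by $\gamma-\mu>0$.

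Second, \ref{l:j2i} is immediate from \ref{l:j2i-}: since $\|p_\mu-p_\gamma\|^2/(\gamma-\mu)\geq0$, we get $\phi(\gamma)\leq\phi(\mu)$ whenever $\gamma>\mu$, so $\phi$ is decreasing on $\RP$.

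Third, for \ref{l:j2iii} I would argue continuity from monotonicity plus the firm-nonexpansiveness-type control that \ref{l:j2i-} also provides on the proximal points. A decreasing function on an interval has one-sided limits everywhere, so it suffices to rule out jumps. At an interior point $\gamma_0>0$: for $\mu\uparrow\gamma_0$ and $\gamma\downarrow\gamma_0$, \ref{l:j2i-} gives $\|p_\mu-p_\gamma\|^2\leq(\gamma-\mu)\big(\phi(\mu)-\phi(\gamma)\big)\leq(\gamma-\mu)\big(\phi(0)-\phi(\gamma_0^+)\big)$ when $\mu,\gamma$ are near $\gamma_0$ — hence $\gamma\mapsto\prox_{\gamma\rocky f}x$ has a limit as $\gamma\to\gamma_0$ from each side, and in fact (taking $\mu=\gamma_0$) $\prox_{\gamma f}x\to\prox_{\gamma_0 f}x$ as $\gamma\downarrow\gamma_0$, while from $\mu<\gamma_0=\gamma$ one gets $\prox_{\mu f}x\to\prox_{\gamma_0 f}x$ as $\mu\uparrow\gamma_0$; so $\gamma\mapsto\prox_{\gamma\rocky f}x$ is continuous at $\gamma_0$. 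Continuity of $f$ along this curve then needs lower semicontinuity of $f$ together with an upper bound. Lower semicontinuity of $f$ gives $\phi(\gamma_0)=f(p_{\gamma_0})\leq\liminf f(p_\gamma)$; for the reverse, monotonicity of $\phi$ gives $\limsup_{\gamma\downarrow\gamma_0}\phi(\gamma)\leq\phi(\gamma_0)$ directly, and for $\gamma\uparrow\gamma_0$ we have $\phi(\gamma)\geq\phi(\gamma_0)$, so combining with the liminf bound forces $\lim_{\gamma\to\gamma_0}\phi(\gamma)=\phi(\gamma_0)$. This handles interior points; the right-continuity at $\gamma_0=0$ is the delicate endpoint: as $\gamma\downarrow0$, taking $\mu=0$ in \ref{l:j2i-} yields $\|\proj_{\cdom f}x-\prox_{\gamma f}x\|^2\leq\gamma\big(\phi(0)-\phi(\gamma)\big)\leq\gamma\big(\phi(0)-\phi(1)\big)\to0$, so $\prox_{\gamma f}x\to\proj_{\cdom f}x=\prox_{0\rocky f}x$; then $\phi(0)=f(\proj_{\cdom f}x)\leq\liminf_{\gamma\downarrow0}f(\prox_{\gamma f}x)=\liminf_{\gamma\downarrow0}\phi(\gamma)$ by lower semicontinuity, while $\phi(\gamma)\leq\phi(0)$ by \ref{l:j2i}, giving $\lim_{\gamma\downarrow0}\phi(\gamma)=\phi(0)$.

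The main obstacle I anticipate is the bookkeeping around $\mu=0$, where $0\rocky f=\iota_{\cdom f}$ rather than $0\cdot f$: one must consistently use that $\prox_{\gamma\rocky f}x\in\cdom f$ (Lemma~\ref{l:j1}\ref{l:j1-+i}) so that $f$ takes a finite value there and the indicator terms vanish, and one must phrase the key inequality in a $\mu$-uniform way — writing it through the characterization in Lemma~\ref{l:j1}\ref{l:j1iii}, which is stated uniformly for $\gamma\in\RP$, sidesteps the need to split cases in the core estimate. A secondary subtlety is that $\phi$ could in principle take the value $\pinf$ at $\gamma=0$ if $x\notin\dom f$ after projection, but since $\proj_{\cdom f}x\in\cdom f$ and $f$ is lsc and proper, $f$ is bounded below on $\cdom f$; here one only needs $\phi(0)<\pinf$ for the quantitative estimates, which holds because $\proj_{\cdom f}x\in\cdom f=\overline{\dom f}$ and we may instead invoke that $\phi$ is decreasing with $\phi(\gamma)$ finite for $\gamma>0$ (as $\prox_{\gamma f}x\in\dom f$) — so in fact one proves continuity on $\RPP$ first, then extends to $0$ using the displayed limit, replacing $\phi(0)$ by $\sup_{\gamma>0}\phi(\gamma)$ if necessary and noting this supremum equals $f(\proj_{\cdom f}x)$ by the lsc argument. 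I would organize the write-up to prove \ref{l:j2i-}, deduce \ref{l:j2i}, then split \ref{l:j2iii} into interior continuity and right-continuity at $0$ exactly as above.
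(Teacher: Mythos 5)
Your items \ref{l:j2i-} and \ref{l:j2i} are correct and essentially identical to the paper's argument (test the variational characterization of Lemma~\ref{l:j1}\ref{l:j1iii} at the two proximal points and add; the $\mu=0$ bookkeeping you describe is exactly how the paper handles it, noting that if $\proj_{\cdom f}x\notin\dom f$ then $\phi(0)=\pinf$ and the inequality is vacuous). Your derivation of the continuity of $\gamma\mapsto\prox_{\gamma\rocky f}x$ directly from \ref{l:j2i-} is also a nice self-contained alternative to the paper's citations of external results on resolvents.

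However, there is a genuine gap in your proof of \ref{l:j2iii} at interior points $\gamma_0>0$: you never establish $\limsup_{\gamma\uparrow\gamma_0}\phi(\gamma)\leq\phi(\gamma_0)$. Lower semicontinuity of $f$ along $p_\gamma\to p_{\gamma_0}$ gives $\phi(\gamma_0)\leq\liminf_{\gamma\uparrow\gamma_0}\phi(\gamma)$, and monotonicity gives $\phi(\gamma)\geq\phi(\gamma_0)$ for $\gamma<\gamma_0$ --- both inequalities point the \emph{same} way, so ``combining with the liminf bound'' forces nothing: the left limit exists by monotonicity but could a priori exceed $\phi(\gamma_0)$ strictly. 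Note that \ref{l:j2i-} cannot rule this out either, since it only bounds $\phi$ at the larger argument from above by $\phi$ at the smaller one. The missing ingredient is an upper bound on $\phi(\gamma)$ near $\gamma_0$; the cheapest fix is the defining minimization property \eqref{e:p}, which gives $\gamma f(p_\gamma)+\tfrac12\|x-p_\gamma\|^2\leq\gamma f(p_{\gamma_0})+\tfrac12\|x-p_{\gamma_0}\|^2$, hence $\phi(\gamma)\leq\phi(\gamma_0)+\tfrac{1}{2\gamma}\big(\|x-p_{\gamma_0}\|^2-\|x-p_\gamma\|^2\big)\to\phi(\gamma_0)$ as $\gamma\to\gamma_0$ by the prox continuity you already proved. (The paper instead invokes Attouch's result on the continuity of $\gamma\mapsto f(\prox_{\gamma f}x)$ for $\gamma>0$, and reserves the lsc-plus-monotonicity argument for the endpoint $\mu=0$, where it does work because monotonicity there supplies the correct one-sided bound $\phi(\gamma)\leq\phi(0)$.) A secondary loose end: your right-continuity argument at $0$ uses \ref{l:j2i-} with $\mu=0$ to get $p_\gamma\to\proj_{\cdom f}x$, which presupposes $\phi(0)<\pinf$; in the degenerate case $\phi(0)=\pinf$ you still need that convergence (the paper gets it from a cited resolvent theorem), and your fallback sketch does not supply it.
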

\begin{proof}
First note that Lemma~\ref{l:66}\ref{l:66i} guarantees that 
$\prox_{\gamma\rocky f}$ and, therefore $\phi$, are
well defined.

\ref{l:j2i-}: 
Set $p=\prox_{\mu\rocky f}x$ and $q=\prox_{\gamma f}x$, and note
that \eqref{e:p} implies that $q\in\dom f$. If $\mu=0$, we assume
that $p=\proj_{\cdom f}x\in\dom f$ since, otherwise,
$\phi(\mu)=\pinf$ and the inequality holds trivially. By
Lemma~\ref{l:j1}\ref{l:j1iii}, $\scal{q-p}{x-p}\leq\mu(f(q)-f(p))$
and $\scal{p-q}{x-q}\leq\gamma(f(p)-f(q))$. Adding these
inequalities yields
\begin{equation}
\|p-q\|^2\leq(\gamma-\mu)\bigl(f(p)-f(q)\bigr)=
(\gamma-\mu)\bigl(\phi(\mu)-\phi(\gamma)\bigr), 
\end{equation}
which is equivalent to the announced inequality.

\ref{l:j2i}: Clear from \ref{l:j2i-}.

\ref{l:j2iii}: 
Set $T\colon\RP\to\HH\colon\gamma\mapsto\prox_{\gamma\rocky f}x$.
It follows from \cite[Proposition~23.31(iii)]{Livre1} applied to
the maximally monotone operator $\partial f$ that $T$ is continuous
on $\RPP$ and from \cite[Theorem~23.48]{Livre1} that it is
right-continuous at $0$. Now suppose that 
$(\gamma_n)_{n\in\NN}$ be a sequence in $\RPP$ such that 
$\gamma_n\to\mu\in\RP$.
Then $T(\gamma_n)\to T(\mu)$. If $\mu=0$, by invoking the
lower semicontinuity of $f$ and \ref{l:j2i}, we get
\begin{equation}
\label{e:12}
\phi(0)=f\bigl(T(\mu)\bigr)\leq\varliminf f\bigl(T(\gamma_n)\bigr)
=\varliminf\phi(\gamma_n)
\leq\varlimsup\phi(\gamma_n)\leq\phi(0)
\end{equation}
and therefore $\phi(\gamma_n)\to\phi(0)$.
If $\mu>0$, the continuity of $\phi$ at $\mu$ is established in 
\cite[Lemma~3.27]{Atto84}.
\end{proof}

The following proposition provides explicit expressions for the
perspective function of Definition~\ref{d:1} as well as conditions
that guarantee that the perspective $\varphi\persp s$ is in
$\Gamma_0(\HH\oplus\GG)$, and hence that $\prox_{\varphi\persp s}$
is well defined.

\begin{proposition}
\label{p:2}
Let $\varphi\in\Gamma_0(\HH)$ and let
$s\colon\GG\to\RXX$ be such that $S=s^{-1}(\RPP)\neq\emp$.
Let $x\in\HH$ and $y\in\GG$. Then the following hold: 
\begin{enumerate}
\item
\label{p:2iii} 
Suppose that $\varphi^*(\HH)\subset\RPX$,
$(\varphi^*)^{-1}(\RPP)\neq\emp$, and $-s\in\Gamma_0(\GG)$. Then 
\begin{equation}
\label{e:975}
(\varphi\persp s)(x,y)=
\begin{cases}
s(y){\varphi}\biggl(\dfrac{x}{s(y)}\biggr),
&\text{if}\;\;0<s(y)<\pinf;\\
(\rec\varphi)(x),&\text{if}\;\;s(y)=0;\\
\pinf,&\text{otherwise}.
\end{cases}
\end{equation}
\item
\label{p:2ii} 
Suppose that $\varphi^*(\HH)\subset\{0,\pinf\}$. Then
$(\varphi\persp s)(x,y)=\varphi(x)+\iota_{\cconv S}(y)$.
\item
\label{p:2i} 
Suppose that 
$\varphi^*(\HH)\subset\RM\cup\{\pinf\}$, 
$(\varphi^*)^{-1}(\RMM)\neq\emp$, and $s\in\Gamma_0(\GG)$. Then
\begin{equation} 
\label{e:365}
(\varphi\persp s)(x,y)=
\begin{cases}
s(y){\varphi}
\biggl(\dfrac{x}{{s}(y)}\biggr),
&\text{if}\;\;0<s(y)<\pinf;\\
(\rec\varphi)(x),&\text{if}\;\;y\in\cconv 
S\:\:\text{and}\:\:s(y)\leq 0;\\
\pinf,&\text{otherwise}.
\end{cases}
\end{equation} 
\end{enumerate}
Additionally, in each case, 
$\varphi\persp s\in\Gamma_0(\HH\oplus\GG)$. 
\end{proposition}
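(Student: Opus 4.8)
The plan is to work from Definition~\ref{d:1}, according to which $\varphi\persp s$ is the largest lower semicontinuous convex minorant of the preperspective $\varphi\ppersp s$, and to transfer everything to the classical perspective $\widetilde{\varphi}$ of \eqref{e:0}. Two observations drive the argument. First, $(\varphi\ppersp s)(x,y)=\widetilde{\varphi}(x,s(y))$ for every $y\in S$, while $\varphi\ppersp s=\pinf$ off $\HH\times S$, so the closed convex hull of $\varphi\ppersp s$ depends only on the restriction of $\widetilde{\varphi}(\cdot,s(\cdot))$ to $\HH\times S$. Second, by Lemma~\ref{l:66}\ref{l:66ii} applied to $\varphi^*$ one has $\widetilde{\varphi}(\cdot,\mu)=(\mu\rocky\varphi^*)^*$, so that
\begin{equation}
\widetilde{\varphi}(x,\mu)=\sup_{u\in\HH}\big(\scal{x}{u}-\mu\varphi^*(u)\big)\quad(\mu>0),\qquad
\widetilde{\varphi}(x,0)=\sigma_{\cdom\varphi^*}(x)=(\rec\varphi)(x).
\end{equation}
This formula converts the sign hypothesis on $\varphi^*$ in each case into a monotonicity property of $\mu\mapsto\widetilde{\varphi}(x,\mu)$, which is the mechanism of the proof; moreover, in each case the function we exhibit will be a proper lower semicontinuous convex minorant of $\varphi\ppersp s$, which forces $\varphi\persp s\in\Gamma_0(\HH\oplus\GG)$ as well.

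\emph{Case \ref{p:2iii}.} Since $\varphi^*\geq 0$ on $\HH$, and $\widetilde{\varphi}=\pinf$ on $\HH\times\RMM$, the map $\mu\mapsto\widetilde{\varphi}(x,\mu)$ is nonincreasing on $\RR$. As $-s\in\Gamma_0(\GG)$, $s$ is concave, upper semicontinuous, and never $\pinf$, so $g\colon(x,y)\mapsto\widetilde{\varphi}(x,s(y))$ (with the convention $\widetilde{\varphi}(x,\minf)=\pinf$) is the composition of $\widetilde{\varphi}$, which is convex, lower semicontinuous, and nonincreasing in its second argument, with $(x,y)\mapsto(x,s(y))$; this makes $g$ convex and lower semicontinuous (the value $\pinf$ of $\widetilde{\varphi}$ on $\HH\times\RMM$ absorbs the points where $s(y)\leq 0$ or $s(y)=\minf$), and $g$ is proper since $g(s(y_0)z,y_0)=s(y_0)\varphi(z)<\pinf$ for $y_0\in S$, $z\in\dom\varphi$; hence $g\in\Gamma_0(\HH\oplus\GG)$. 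Now $g=\varphi\ppersp s$ on $\HH\times S$ and $g\leq\pinf=\varphi\ppersp s$ off it, so $g\leq\varphi\persp s$. For the reverse inequality, $\varphi\persp s\leq\varphi\ppersp s=g$ on $\HH\times S$, and any $y$ with $s(y)\leq 0$ lies in $\overline{S}$ (concavity of $s$ forces $[y,y_0)\subset S$ for each $y_0\in S$); approaching such a point through $\HH\times S$ and invoking the closure property $\liminf_{(x',\mu)\to(x,0^{+})}\mu\varphi(x'/\mu)=(\rec\varphi)(x)$ of the classical perspective, lower semicontinuity of $\varphi\persp s$ yields $\varphi\persp s\leq g$ everywhere. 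Thus $\varphi\persp s=g$, which is \eqref{e:975}.

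\emph{Case \ref{p:2ii}.} Here $\varphi^*=\iota_K$ with $K=\cdom\varphi^*$ nonempty closed convex, so $\varphi=\sigma_K$ is positively homogeneous and $(\varphi\ppersp s)(x,y)=\sigma_K(x)+\iota_S(y)$; since $\sigma_K$ is already lower semicontinuous convex, the closed convex hull of this is $\sigma_K(x)+\iota_{\cconv S}(y)=\varphi(x)+\iota_{\cconv S}(y)$. \emph{Case \ref{p:2i}.} Now $\varphi^*\leq 0$ on $\HH$, so $\mu\mapsto\widetilde{\varphi}(x,\mu)$ is nondecreasing on $\RP$, while $(\varphi^*)^{-1}(\RMM)\neq\emp$ gives $\widetilde{\varphi}(x,\mu)\to\pinf$ as $\mu\to\pinf$ (use $u_0$ with $\varphi^*(u_0)<0$). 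These two facts will show that $g\colon(x,y)\mapsto\widetilde{\varphi}(x,\max\{s(y),0\})+\iota_{\cconv S}(y)$ lies in $\Gamma_0(\HH\oplus\GG)$ and equals the right-hand side of \eqref{e:365}. Again $g\leq\varphi\ppersp s$, hence $g\leq\varphi\persp s$. For the reverse inequality I would compute the closed convex hull epigraphically: for $(x,y,\xi)=\sum_i\lambda_i(x_i,y_i,\xi_i)$ with $y_i\in S$ and $\xi_i\geq\widetilde{\varphi}(x_i,s(y_i))$, joint convexity of $\widetilde{\varphi}$, convexity of $s$ (so $s(y)\leq\sum_i\lambda_is(y_i)$), and monotonicity of $\widetilde{\varphi}(x,\cdot)$ on $\RP$ give $\xi\geq\widetilde{\varphi}(x,\max\{s(y),0\})$ with $y\in\conv S$; passing to closures then yields $\varphi\persp s\geq g$. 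Hence $\varphi\persp s=g$, which is \eqref{e:365}.

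\emph{Main obstacle.} The delicate point, shared by cases \ref{p:2iii} and \ref{p:2i}, is to show that forming the closed convex hull fills in \emph{exactly} the recession values $(\rec\varphi)(x)$ on $\{s(y)\leq 0\}\cap\cconv S$, and nothing beyond $\cconv S$, so that the piecewise formulas are matched rather than merely majorized; this relies on the closure behaviour $\mu\varphi(x/\mu)\to(\rec\varphi)(x)$ as $\mu\downarrow 0$ of the classical perspective together with $\rec\varphi=\sigma_{\cdom\varphi^*}$. Case \ref{p:2i} is the harder one, because there the composition $(x,y)\mapsto\widetilde{\varphi}(x,s(y))$ need not be lower semicontinuous convex; this forces the epigraphical computation, and it is also where the hypothesis $(\varphi^*)^{-1}(\RMM)\neq\emp$ is genuinely used, being precisely what makes the candidate $g$ closed at points of $\cconv S\setminus\dom s$.
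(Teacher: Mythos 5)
A preliminary remark: the paper does not prove Proposition~\ref{p:2} at all --- it is quoted from \cite{Vina23} --- and the machinery it does develop (Lemma~\ref{l:66}, Lemmas~\ref{l:12}--\ref{l:24}, Proposition~\ref{p:3}) indicates that the reference obtains the formulas by computing $(\varphi\ppersp s)^*$ in terms of $\varphi^*$ and the envelopes $\bas{(-s)}$, $\haut{s}$ and then biconjugating, rather than by your direct closed-convex-hull computation. Your route is nonetheless legitimate in principle: the identification $(\varphi\ppersp s)(x,y)=\widetilde{\varphi}(x,s(y))$ on $\HH\times S$ and the monotonicity of $\mu\mapsto\widetilde{\varphi}(x,\mu)$ induced by the sign of $\varphi^*$ are the right observations, case~\ref{p:2ii} is fine, and case~\ref{p:2iii} is essentially correct. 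Two small repairs there: the assertion that every $y$ with $s(y)\leq 0$ lies in $\overline{S}$ is false for $s(y)<0$ (take $s\colon y\mapsto 1-y^2$, $y=2$), but it is only needed, and only true, when $s(y)=0$, which is the only place $g$ is finite off $\HH\times S$; and realizing an arbitrary sequence $\mu_n\downarrow 0$ as $\mu_n=s(y_n)$ with $y_n\to y$ requires the one-dimensional continuity of $s$ on a segment $[y,y_0]$, $y_0\in S$, which you should spell out.

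The genuine gap is in case~\ref{p:2i}. You must prove both $g\leq\varphi\persp s$ and $\varphi\persp s\leq g$. The first follows, as you say, from $g$ being a proper lower semicontinuous convex minorant of $\varphi\ppersp s$. But your ``reverse inequality'' argument --- showing that any convex combination $(x,y,\xi)=\sum_i\lambda_i(x_i,y_i,\xi_i)$ of points of $\epi(\varphi\ppersp s)$ satisfies $y\in\conv S$ and $\xi\geq\widetilde{\varphi}(x,\max\{s(y),0\})$ --- establishes $\overline{\conv}\,\epi(\varphi\ppersp s)\subset\epi g$, i.e.\ $\varphi\persp s\geq g$ \emph{again}; you have proved the same inequality twice and concluded equality. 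The direction actually needed, namely that $(x,y,(\rec\varphi)(x))$ belongs to $\overline{\conv}\,\epi(\varphi\ppersp s)$ for every $y\in\cconv S$ with $s(y)\leq 0$, is never addressed, and it is the hard part: since $s$ is convex, the set $\{s\leq 0\}\cap\cconv S$ can sit deep inside $\cconv S$, far from $\overline{S}$ (e.g.\ $\GG=\RR$, $s\colon y\mapsto y^2-1$, $S=\{|y|>1\}$, $y=0$), so the limiting argument through $\HH\times S$ that saved case~\ref{p:2iii} is unavailable. One must instead exhibit convex combinations $\sum_i\lambda_iy_i$ of points of $S$ converging to $y$ with aggregate scale $\sum_i\lambda_is(y_i)\to 0^+$, choose $x_i$ proportional to $s(y_i)$ so that the corresponding combination of epigraph points has value $\widetilde{\varphi}\big(x,\sum_j\lambda_js(y_j)\big)$, and then invoke $\widetilde{\varphi}(x,\mu)\to\widetilde{\varphi}(x,0)=(\rec\varphi)(x)$ as $\mu\downarrow 0$, which itself requires the line-segment closure property of $\widetilde{\varphi}$ rather than mere monotonicity. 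Your closing paragraph correctly identifies this as the delicate point, but identifying it is not proving it, so case~\ref{p:2i} remains incomplete as written.
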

\begin{proof}
\ref{p:2iii}:
Since $\varphi\in\Gamma_0(\HH)$, \eqref{e:c} yields
\begin{equation}
\varphi(0)=\varphi^{**}(0)=\sup_{x^*\in\HH}-\varphi^*(x^*)
=-\inf_{x^*\in\HH}\varphi^*(x^*).
\end{equation}
Hence, 
\begin{equation}
\varphi(0)\leq 0\quad\Leftrightarrow\quad \varphi^*(\dom 
\varphi^*)\subset\RP.
\end{equation}
The result therefore follows from \cite[Lemma~2.5(iii)]{Vina23} and 
\cite[Corollary~5.3(iii)]{Vina23}.

\ref{p:2ii}: This follows from \cite[Lemma~2.5(iii)]{Vina23} and 
\cite[Corollary~5.3(ii)]{Vina23}.

\ref{p:2i}:
Since $\varphi\in\Gamma_0(\HH)$, 
\cite[Lemma~2.6]{Vina23} yields
\begin{equation}
\rec\varphi\leq \varphi\quad\Leftrightarrow\quad \varphi^*(\dom 
\varphi^*)\subset\RM.
\end{equation}
Hence, the result follows from \cite[Lemma~2.5(iii)]{Vina23} and 
\cite[Corollary~5.3(i)]{Vina23}.
\end{proof}

\section{Computation of the proximity operator}
\label{sec:4}
Our strategy to compute the proximity operator of perspective
functions is to use the following results based on the
Fenchel--Young identity.

\begin{proposition}
\label{p:8}
Suppose that $\varphi\in\Gamma_0(\HH)$ and $s\colon\GG\to\RXX$
satisfy the conditions of Proposition~\ref{p:2}. Let $x\in\HH$, 
$y\in\GG$, and $\gamma\in\RPP$. Set 
$(p,q)=\prox_{\gamma (\varphi\persp s)}(x,y)$. Then 
the following hold: 
\begin{enumerate}
\item
\label{p:8i}
$(\varphi\persp s)(p,q)+(\varphi\persp s)^*
\biggl(\dfrac{x-p}{\gamma},\dfrac{y-q}{\gamma}\biggr)
=\Scal{p}{\dfrac{x-p}{\gamma}}+\Scal{q}{\dfrac{y-q}{\gamma}}$.
\item
\label{p:8ii}
$(p,q)\in\dom(\varphi\persp s)$.
\item
\label{p:8iii}
$\gamma^{-1}(x-p,y-q)\in\dom(\varphi\persp s)^*$.
\end{enumerate}
\end{proposition}
\begin{proof}
We note that $(p,q)$ is well defined by virtue of 
Proposition~\ref{p:2}.

\ref{p:8i}: This follows from Lemma~\ref{l:j1}\ref{l:j1ii-}.

\ref{p:8ii}--\ref{p:8iii}: These follow from \ref{p:8i}.
\end{proof}

To implement the above strategy, explicit expressions are required
for $(\varphi\persp s)^*$.

\begin{proposition}{\rm\cite[Theorem~4.5]{Vina23}}
\label{p:3}
Let $\varphi\colon\HH\to\RX$ be proper, let $s\colon\GG\to\RXX$
be such that $S=s^{-1}(\RPP)\neq\emp$, let $x^*\in\HH$, and
let $y^*\in\GG$. Then the following hold: 
\begin{enumerate}
\item 
\label{p:3i} 
Suppose that $\varphi^*(\HH)\subset\RPX$ and 
$(\varphi^*)^{-1}(\RPP)\neq\emp$.
Then
\begin{equation}
\label{e:r7}
(\varphi\persp s)^*(x^*,y^*)=
\begin{cases}
\varphi^*(x^*)\:\base{(-s)}\Bigg(\dfrac{y^*}{\varphi^*(x^*)}\Bigg),
&\text{if}\:\:0<\varphi^*(x^*)<\pinf;\\
\sigma_{\cconv S}(y^*),&\text{if}\:\:\varphi^*(x^*)=0;\\[3mm]
\pinf,&\text{if}\:\:\varphi^*(x^*)=\pinf.
\end{cases}
\end{equation}
\item 
\label{p:3ii} 
Suppose that $\varphi^*(\HH)\subset\{0,\pinf\}$. Then 
\begin{equation} 
\label{e:38conj}
\bigl(\varphi
\persp{s}\bigr)^{*}(x^*,y^*)=
\iota_{(\varphi^*)^{-1}(\{0\})}(x^*)+\sigma_{\cconv S}(y^*).
\end{equation} 
\item 
\label{p:3iii} 
Suppose that $\varphi^*(\HH)\subset\RM\cup\{\pinf\}$ and 
$(\varphi^*)^{-1}(\RMM)\neq\emp$. Then 
\begin{equation} 
\label{e:37conj}
\bigl(\varphi\persp{s}\bigr)^{*}(x^*,y^*)=
\begin{cases}
-\varphi^*(x^*)\:\haute{s}\Bigg(\dfrac{y^*}{-\varphi^*(x^*)}
\Bigg),&\text{if}\:\:\minf<\varphi^*(x^*)<0;\\
\sigma_{\cconv S}(y^*),&\text{if}\:\:\varphi^*(x^*)=0;\\[3mm]
\pinf,&\text{if}\:\:\varphi^*(x^*)=\pinf.
\end{cases}
\end{equation} 
\end{enumerate}
\end{proposition}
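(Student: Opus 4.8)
The plan is to compute the conjugate of $\varphi\persp s$ by combining Definition~\ref{d:1} with the biconjugation theorem. Since $\varphi\persp s$ is, by construction, the largest lower semicontinuous convex minorant of the preperspective $\varphi\ppersp s$, and since (under the hypotheses of each case) it is proper by Proposition~\ref{p:2}, we have $(\varphi\persp s)^* = (\varphi\ppersp s)^*$; indeed a function and its lower semicontinuous convex hull share the same conjugate. So the task reduces to computing, for fixed $(x^*,y^*)\in\HH\times\GG$,
\[
(\varphi\ppersp s)^*(x^*,y^*)=\sup_{x\in\HH,\;y\in\GG}\Big(\scal{x}{x^*}+\scal{y}{y^*}-(\varphi\ppersp s)(x,y)\Big),
\]
where by \eqref{e:1} only the points $(x,y)$ with $0<s(y)<\pinf$ contribute. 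On that set, writing $\eta=s(y)>0$ and substituting $x=\eta u$, the inner expression becomes $\eta\big(\scal{u}{x^*}-\varphi(u)\big)+\scal{y}{y^*}$, so that taking the supremum over $u\in\HH$ first yields $\eta\,\varphi^*(x^*)+\scal{y}{y^*}$, and thus
\[
(\varphi\ppersp s)^*(x^*,y^*)=\sup_{y\in S}\Big(s(y)\,\varphi^*(x^*)+\scal{y}{y^*}\Big),
\qquad S=s^{-1}(\RPP).
\]

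Next I would split into the three cases according to the sign of $\varphi^*(x^*)$ and invoke the relevant $\blacktriangleup/\blacktriangledown$ envelope. If $\varphi^*(x^*)=\pinf$, the supremum is $\pinf$ (as $S\neq\emp$), matching the last line of each formula. If $\varphi^*(x^*)=0$, the supremum is $\sup_{y\in S}\scal{y}{y^*}=\sigma_S(y^*)=\sigma_{\cconv S}(y^*)$, since the support function of a set equals that of its closed convex hull; this gives the middle line in \ref{p:3i} and \ref{p:3iii}, and (together with the constraint $\varphi^*(x^*)\in\{0,\pinf\}$ forcing $x^*\in(\varphi^*)^{-1}(\{0\})$, else the value is $\pinf=\iota_{(\varphi^*)^{-1}(\{0\})}(x^*)$) also \ref{p:3ii}. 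The substantive cases are when $\varphi^*(x^*)$ is finite and nonzero. For \ref{p:3i}, $\varphi^*(x^*)>0$: write $\alpha=\varphi^*(x^*)$, so the supremum is $\sup_{y\in S}\big(\alpha\, s(y)+\scal{y}{y^*}\big)=\alpha\sup_{y\in S}\big(-(-s)(y)+\scal{y}{y^*/\alpha}\big)$. I would express $\sup_{y\in S}\big(\scal{y}{z}-(-s)(y)\big)$ in terms of the $\blacktriangledown$ envelope of $-s$: since $(-s)^{-1}(\RMM)=S\neq\emp$, Lemma~\ref{l:12} tells us $\base{(-s)}\in\Gamma_0(\GG)$ agrees with $-s$ on $S$ and has domain $(-s)^{-1}(\RM)$, and one checks that restricting the supremum over $\GG$ to $S$ does not change it (the extra points of $\dom\base{(-s)}$ form a boundary/closure set contributing nothing new), so the supremum equals $\base{(-s)}^*(z)$; this yields the first line of \eqref{e:r7}. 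For \ref{p:3iii}, $\varphi^*(x^*)<0$: set $\beta=-\varphi^*(x^*)>0$, so the supremum is $\sup_{y\in S}\big(-\beta\,s(y)+\scal{y}{y^*}\big)=\beta\sup_{y\in S}\big(\scal{y}{y^*/\beta}-s(y)\big)$, and now using Lemma~\ref{l:24} (here $s^{-1}(\RPP)=S\neq\emp$, so $\haut{s}\in\Gamma_0$ agrees with $s$ on $S$ with $\dom\haut{s}=\cconv S$) the supremum over $S$ equals $\haut{s}^*(y^*/\beta)$, i.e.\ $\haute{s}(y^*/\beta)$ in the notation of \eqref{e:37conj}, giving its first line.

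The main obstacle I anticipate is the passage from the supremum over $S$ to the conjugate of the envelope: one must verify carefully that $\sup_{y\in S}\big(\scal{y}{z}-(-s)(y)\big)=\sup_{y\in\GG}\big(\scal{y}{z}-\base{(-s)}(y)\big)$ (and likewise with $s$ and $\haut{s}$). The inequality ``$\leq$'' is immediate since $\base{(-s)}\leq(-s)$ and $\base{(-s)}$ agrees with $-s$ on $S$; the reverse needs the fact, available from Lemma~\ref{l:12}\ref{l:12ii}, that $\dom\base{(-s)}=\overline{S}$ (closure when $-s$ is not already closed on a larger set) together with lower semicontinuity of $\base{(-s)}$, so that any $y\in\dom\base{(-s)}\setminus S$ is a limit of points in $S$ along which $\scal{\cdot}{z}-(-s)(\cdot)$ has $\limsup$ at least $\scal{y}{z}-\base{(-s)}(y)$. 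An entirely analogous continuity/approximation argument, using $\dom\haut{s}=\cconv S$ from Lemma~\ref{l:24}\ref{l:24ii}, handles the $\haut{s}$ case. A secondary point to get right is the bookkeeping of $\pm$ signs and the rescaling by $|\varphi^*(x^*)|$ when pulling the positive scalar out of the supremum, but that is routine once the envelope identification is in place. This reasoning is essentially the one behind Proposition~\ref{p:2}, applied to conjugates rather than to the functions themselves; alternatively, \eqref{e:r7} and \eqref{e:37conj} could be read off directly from \eqref{e:975} and \eqref{e:365} by taking conjugates and using Lemma~\ref{l:66}\ref{l:66ii}, but the direct computation above is cleaner and avoids invoking the full strength of Proposition~\ref{p:2}.
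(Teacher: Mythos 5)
The paper itself offers no proof of Proposition~\ref{p:3} (it is imported verbatim from \cite{Vina23}), so there is no internal argument to compare against; judged on its own terms, your computation follows the natural route and its main line is correct. Reducing $(\varphi\persp s)^*$ to $(\varphi\ppersp s)^*$ is legitimate because a function and its largest lower semicontinuous convex minorant always have the same conjugate (so you do not actually need the properness supplied by Proposition~\ref{p:2} --- which is just as well, since Proposition~\ref{p:3} does not assume $\varphi\in\Gamma_0(\HH)$ or $\pm s\in\Gamma_0(\GG)$), and the substitution $x=s(y)u$ correctly yields
\begin{equation*}
(\varphi\ppersp s)^*(x^*,y^*)=\sup_{y\in S}\big(s(y)\,\varphi^*(x^*)+\scal{y}{y^*}\big),
\end{equation*}
after which the case discussion on the sign of $\varphi^*(x^*)$ is routine.

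The one step you single out as the main obstacle is where your write-up wobbles, in two ways. First, you repeatedly conflate the envelope $\bas{(-s)}=\pbas{(-s)}^{**}$ with its conjugate $\base{(-s)}=\bas{(-s)}^{*}$ (and likewise $\haut{s}$ with $\haute{s}$): the identity you need is $\sup_{y\in S}\big(\scal{y}{z}-(-s)(y)\big)=\base{(-s)}(z)$, not the equality of two suprema as you wrote it. Second, once stated correctly, the identity is immediate and requires no approximation argument: $\pbas{(-s)}$ equals $-s$ on $S$ and $\pinf$ elsewhere, so the left-hand side is exactly $\pbas{(-s)}^*(z)$, while $\base{(-s)}=\big(\pbas{(-s)}^{**}\big)^{*}=\pbas{(-s)}^{***}=\pbas{(-s)}^{*}$ by the universal identity $f^{***}=f^*$; the same one-liner gives $\haute{s}=\phaut{s}^{*}$ for case \ref{p:3iii}. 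This matters beyond elegance: the fallback density/lower-semicontinuity argument you sketch leans on Lemma~\ref{l:12}\ref{l:12ii} and Lemma~\ref{l:24}\ref{l:24ii}, which require $-s\in\Gamma_0(\GG)$ (respectively $s\in\Gamma_0(\GG)$), hypotheses Proposition~\ref{p:3} does not make; and even when they hold, knowing $\dom\bas{(-s)}=\overline{S}$ does not by itself deliver the $\limsup$ inequality you assert along approximating sequences. Replace that paragraph by the triple-conjugation identity and your proof is complete and valid under the stated hypotheses.
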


We are now ready to present our main result.

\begin{theorem}
\label{t:2}
Let $\varphi\in\Gamma_0(\HH)$ and let $s\colon\GG\to\RXX$ be such
that $S=s^{-1}(\RPP)\neq\emp$. Let $x\in\HH$, $y\in\GG$, and 
$\gamma\in\RPP$. Then the following hold: 
\begin{enumerate}
\item
\label{t:2i} 
Suppose that $\varphi^*(\HH)\subset\RPX$,
$(\varphi^*)^{-1}(\RPP)\neq\emp$, and $-s\in\Gamma_0(\GG)$. 
Then there exists a unique $\eta\in\RP$ such that 
\begin{equation}
\label{e:mainr}
\bas{(-s)}\biggl(\prox_{\gamma
\varphie\bigl(\prox_{\frac{\eta}{\gamma}\rocky\varphie}
\bigl(\frac{x}{\gamma}\bigr)\bigr)\rocky\bas{(-s)}}\,y\biggr)
+\eta=0.
\end{equation}
Furthermore,
\begin{equation}
\label{e:Kj}
\prox_{\gamma(\varphi\persp s)}(x,y)
=\biggl(x-\gamma\,
\prox_{\frac{\eta}{\gamma}\rocky\varphie}
\Big(\frac{x}{\gamma}\Big),\prox_{\gamma
\varphie\bigl(\prox_{\frac{\eta}{\gamma}\rocky\varphie}
\bigl(\frac{x}{\gamma}\bigr)\bigr)\rocky\bas{(-s)}}\, y\biggr).
\end{equation}
\item
\label{t:2ii} 
Suppose that $\varphi^*(\HH)\subset\{0,\pinf\}$. Then 
$\prox_{\gamma(\varphi\persp s)}(x,y)=
(\prox_{\gamma\varphi}\,x,\proj_{\cconv S}\,y)$.
\item
\label{t:2iii} 
Suppose that 
$\varphi^*(\HH)\subset\RM\cup\{\pinf\}$, 
$(\varphi^*)^{-1}(\RMM)\neq\emp$, and $s\in\Gamma_0(\GG)$. 
Then there exists a unique $\eta\in\RP$ such that 
\begin{equation}
\label{e:mainriii}
\varphi^*\biggl(\prox_{\frac{1}{\gamma}
\haut{s}(\prox_{\gamma\eta\rocky\haut{s}} y)\rocky\varphie}\,
\Big(\frac{x}{\gamma}\Big)\biggr)+\eta=0.
\end{equation}
Furthermore,
\begin{equation}
\label{e:Kjiii}
\prox_{\gamma(\varphi\persp s)}(x,y)
=\biggl(x-\gamma\,\prox_{\frac{1}{\gamma}\haut{s}
(\prox_{\gamma\eta\rocky\haut{s}}y)\rocky\varphie}
\Big(\frac{x}{\gamma}\Big),
\prox_{\gamma\eta\rocky\haut{s}}y\biggr).
\end{equation}
\end{enumerate}
\end{theorem}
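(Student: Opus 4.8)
The plan is to compute $\prox_{\gamma(\varphi\persp s)}(x,y)$ via the subdifferential characterization: $(p,q)=\prox_{\gamma(\varphi\persp s)}(x,y)$ if and only if $(x-p,y-q)\in\gamma\,\partial(\varphi\persp s)(p,q)$, which by Fenchel--Young (Lemma~\ref{l:fy}, with equality) is equivalent to
\begin{equation}
(\varphi\persp s)(p,q)+(\varphi\persp s)^*\Big(\tfrac{x-p}{\gamma},\tfrac{y-q}{\gamma}\Big)=\Big\langle p\,\Big|\,\tfrac{x-p}{\gamma}\Big\rangle+\Big\langle q\,\Big|\,\tfrac{y-q}{\gamma}\Big\rangle.
\end{equation}
For item~\ref{t:2i} I would plug in the explicit primal formula \eqref{e:975} for $\varphi\persp s$ and the conjugate formula \eqref{e:r7} from Proposition~\ref{p:3}\ref{p:3i}. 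Write $x^*=(x-p)/\gamma$ and set $\eta=\varphi^*(x^*)\geq 0$. The Fenchel--Young identity then decouples: the $\HH$-component forces $\varphi^*(x^*)+\varphi\big(\text{something}\big)$-type equality governing $p$ in terms of $\eta$, while the $\GG$-component becomes a Fenchel--Young equality between $\eta\rocky\bas{(-s)}$ (a rescaled $\blacktriangledown$ envelope) evaluated at $q$ and its conjugate at $(y-q)/\gamma$. The first yields $p=x-\gamma\,\prox_{(\eta/\gamma)\rocky\varphie}(x/\gamma)$ after using Lemma~\ref{l:j1}\ref{l:j1ii} (the Moreau decomposition, applied to $\varphi^*$), and identifying $\eta=\varphi^*(x^*)$ through Lemma~\ref{l:j1}\ref{l:j1ii-}; the second yields $q=\prox_{\gamma\,c\rocky\bas{(-s)}}\,y$ where the scaling constant $c$ equals $\varphie$ evaluated at that same proximal point, i.e. exactly the coefficient appearing in \eqref{e:Kj}.

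The scalar equation \eqref{e:mainr} is then nothing but the self-consistency condition "$\eta=\varphi^*(x^*)$" rewritten: once $p$ and $q$ are expressed as above in terms of the free parameter $\eta$, one must have $\varphi^*((x-p)/\gamma)=\eta$, and after substituting $p$ and using the conjugacy relation $(\bas{(-s)})^*$ evaluated appropriately together with the Fenchel--Young equality for the $\GG$-block, this collapses to $\bas{(-s)}(q)+\eta=0$, which is precisely \eqref{e:mainr} with $q$ written out. So the logical structure is: (a) show any solution of \eqref{e:Kj}--\eqref{e:mainr} is the prox; (b) show existence and uniqueness of $\eta\in\RP$ solving \eqref{e:mainr}. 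For (b) I would define $\Theta\colon\RP\to\RXX$ by $\Theta(\eta)=\bas{(-s)}(q(\eta))+\eta$ where $q(\eta)$ is the right-hand side of the second component of \eqref{e:Kj}, and prove $\Theta$ is continuous (using Lemma~\ref{l:j2}\ref{l:j2iii} for the $\gamma\rocky$-prox dependence on its coefficient, plus continuity of $\eta\mapsto\prox_{(\eta/\gamma)\rocky\varphie}(x/\gamma)$), strictly increasing or at least such that it changes sign, with $\Theta(0)\le 0$ (since $\bas{(-s)}\le 0$ on its domain by Lemma~\ref{l:12}, and $\bas{(-s)}(\text{something in its domain})<0$ unless already at a zero) and $\Theta(\eta)\to\pinf$; monotonicity in $\eta$ comes from $\eta\rocky\bas{(-s)}$ being increasing in the multiplier (recession-type monotonicity) combined with the explicit $+\eta$ term, giving uniqueness.

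Item~\ref{t:2ii} is immediate: by Proposition~\ref{p:2}\ref{p:2ii}, $\varphi\persp s=\varphi\oplus\iota_{\cconv S}$ is separable, so its prox is the product of $\prox_{\gamma\varphi}$ and $\proj_{\cconv S}$. Item~\ref{t:2iii} is the mirror image of \ref{t:2i}: use \eqref{e:365} and the conjugate formula \eqref{e:37conj} from Proposition~\ref{p:3}\ref{p:3iii}, set $x^*=(x-p)/\gamma$ and $\eta=-\varphi^*(x^*)\ge 0$; now the roles of the two blocks swap — the $\GG$-component is handled first, giving $q=\prox_{\gamma\eta\rocky\haut{s}}y$ via Lemma~\ref{l:j1}, and then $\haut{s}(q)$ supplies the scaling coefficient in the $\HH$-block, yielding $p=x-\gamma\,\prox_{\frac1\gamma\haut{s}(q)\rocky\varphie}(x/\gamma)$, with the self-consistency equation $\varphi^*((x-p)/\gamma)=-\eta$ becoming \eqref{e:mainriii}. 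Existence and uniqueness of $\eta$ follow the same continuity/monotonicity argument, now using Lemma~\ref{l:24} (nonnegativity of $\haut{s}$ on its domain) to get the correct sign of the analogue of $\Theta$ at the endpoints.

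I expect the main obstacle to be the careful bookkeeping when the constants vanish. When $\eta=0$ one replaces $(\eta/\gamma)\rocky\varphie$ by $\proj_{\cdom\varphie}$ via Lemma~\ref{l:j1}\ref{l:j1-i}, and likewise $\gamma c\rocky\bas{(-s)}$ degenerates when the coefficient $c=\varphie(\cdot)$ hits $0$; keeping the Fenchel--Young equalities and the Moreau decomposition valid through these boundary cases — and checking that \eqref{e:mainr} remains the right scalar equation there — is where the argument needs the most care, and is precisely why the $\rocky$ operation (rather than plain scalar multiplication) is built into the statement. The other delicate point is verifying that the $\eta$ produced by solving \eqref{e:mainr} indeed reproduces $\varphi^*((x-p)/\gamma)$ exactly, i.e. that the map $\eta\mapsto\varphi^*((x-p(\eta))/\gamma)$ is consistent — this is a fixed-point identity that should fall out of Lemma~\ref{l:j1}\ref{l:j1ii-} and \ref{l:j1ii} but must be stated cleanly.
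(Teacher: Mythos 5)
Your overall strategy coincides with the paper's: characterize $(p,q)=\prox_{\gamma(\varphi\persp s)}(x,y)$ by the Fenchel--Young equality of Lemma~\ref{l:j1}\ref{l:j1ii-}, substitute the formulas of Propositions~\ref{p:2} and~\ref{p:3}, and read off the two components. But the step you describe as ``the Fenchel--Young identity then decouples'' is the crux of the whole proof, and it is not automatic; as written it is a genuine gap. The mechanism is the following. Using $\varphi=\varphi^{**}$ and Lemma~\ref{l:66}\ref{l:66ii} one rewrites $(\varphi\persp s)(p,q)=\big(s(q)\rocky\varphi^*\big)^*(p)$, and using the recession identity $\sigma_{\cconv S}=\rec\base{(-s)}$ one rewrites $(\varphi\persp s)^*\big(\tfrac{x-p}{\gamma},\tfrac{y-q}{\gamma}\big)=\big(\varphi^*(\tfrac{x-p}{\gamma})\rocky\bas{(-s)}\big)^*\big(\tfrac{y-q}{\gamma}\big)$. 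One then \emph{adds to both sides} the two cross terms $\big(s(q)\rocky\varphi^*\big)(\tfrac{x-p}{\gamma})$ and $\big(\varphi^*(\tfrac{x-p}{\gamma})\rocky\bas{(-s)}\big)(q)$, which cancel exactly because $\bas{(-s)}(q)=-s(q)$ on its domain (Lemma~\ref{l:12}\ref{l:12iii}) --- this is where the $\rocky$ convention earns its keep when $s(q)=0$ or $\varphi^*(\tfrac{x-p}{\gamma})=0$. Only now does the single joint equality split: each of the two resulting pairs obeys the Fenchel--Young inequality of Lemma~\ref{l:fy}, and since their sum equals $\scal{p}{\tfrac{x-p}{\gamma}}+\scal{q}{\tfrac{y-q}{\gamma}}$, each must hold with equality separately, whence Lemma~\ref{l:j1}\ref{l:j1i}--\ref{l:j1ii} delivers the two prox formulas. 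Without the cross-term cancellation and the ``a sum of two inequalities achieving equality forces each to be an equality'' argument, there is no decoupling to speak of.

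Two further points. First, in \ref{t:2i} you set $\eta=\varphi^*\big(\tfrac{x-p}{\gamma}\big)$, but the parameter in \eqref{e:mainr}--\eqref{e:Kj} is $\eta=s(q)=-\bas{(-s)}(q)$; the quantity $\varphi^*\big(\tfrac{x-p}{\gamma}\big)$ is the coefficient of the \emph{second} prox, not $\eta$, and the self-consistency condition is $\eta=s(q(\eta))$ rather than $\eta=\varphi^*(\cdot)$ (your identification is the correct one only for \ref{t:2iii}). Second, the paper's logical structure is lighter than your (a)+(b) plan: existence of a solution to \eqref{e:mainr} comes for free by exhibiting $\eta=s(q)$ from the already-existing prox, and Lemma~\ref{l:j2}\ref{l:j2i} is used only to show that $\phi_1\circ\phi_2+\Id$ is strictly increasing, hence uniqueness; no intermediate-value or sign analysis of your map $\Theta$ is needed, and no converse verification that a solution of the system reproduces the prox. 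Likewise, \ref{t:2iii} is not reproved directly but deduced from \ref{t:2i} applied to the swapped perspective $\haute{s}\persp(-\varphi^*)$ together with the Moreau decomposition of Lemma~\ref{l:j1}\ref{l:j1ii}, which avoids duplicating the decoupling argument; your mirror-image plan would work but must rerun it in full.
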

\begin{proof}
Set $(p,q)=\prox_{\gamma (\varphi\persp s)}(x,y)$.

\ref{t:2i}:
We deduce from Proposition~\ref{p:8}\ref{p:8iii} and 
Proposition~\ref{p:3}\ref{p:3i} that
\begin{equation}
\label{e:xp}
\varphi^*\biggl(\dfrac{x-p}{\gamma}\biggr)\in\RP
\end{equation}
and from Proposition~\ref{p:8}\ref{p:8ii} and
Proposition~\ref{p:2}\ref{p:2iii} that 
\begin{equation}
\label{e:q}
s(q)\in\RP.
\end{equation}
Since $\varphi\in\Gamma_0(\HH)$, we have $\varphi^{**}=\varphi$
\cite[Corollary~13.38]{Livre1}. Hence, it follows from 
Proposition~\ref{p:2}\ref{p:2iii}, \eqref{e:0}, 
Lemma~\ref{l:66}\ref{l:66ii},
and \eqref{e:q} that
\begin{equation}
\label{e:a1}
(\varphi\persp s)(p,q)=
\widetilde{\varphi}\bigl(p,s(q)\bigr)
=\bigl(s(q)\rocky\varphi^*\bigr)^*(p).
\end{equation}
Next, since Lemma~\ref{l:12}\ref{l:12i} asserts that
$\bas{(-s)}\in\Gamma_0(\GG)$, we have
$\base{(-s)}\in\Gamma_0(\GG)$ and hence deduce from 
Lemma~\ref{l:12}\ref{l:12ii} and \cite[Proposition~13.49]{Livre1} 
that
\begin{equation}
\label{e:s}
\sigma_{\cconv S}=\sigma_{\overline{S}}=\sigma_{\dom\bas{(-s)}}
=\rec{\base{(-s)}}.
\end{equation}
Thus, it follows from \eqref{e:xp}, 
Proposition~\ref{p:3}\ref{p:3i}, \eqref{e:0}, 
and Lemma~\ref{l:66}\ref{l:66ii} that 
\begin{align}
&\hskip -6mm
(\varphi\persp s)^*\biggl(\dfrac{x-p}{\gamma},
\dfrac{y-q}{\gamma}\biggr)\nonumber\\
&=
\begin{cases}
\varphi^*\biggl(\dfrac{x-p}{\gamma}\biggr)
\base{(-s)}\biggl(\dfrac{(y-q)/\gamma}
{\varphi^*\bigl((x-p)/\gamma\bigr)}\biggr),
&\text{if}\;\;0<\varphi^*\biggl(\dfrac{x-p}{\gamma}\biggr)
<\pinf;\\[4mm]
\Big(\rec{\base{(-s)}}\Big)\biggl(\dfrac{y-q}{\gamma}\biggr),
&\text{if}\;\;\varphi^*\biggl(\dfrac{x-p}{\gamma}\biggr)=0
\end{cases}
\label{e:a7}\\[2mm]
&=\widetilde{\base{(-s)}}\biggl(\dfrac{y-q}{\gamma},
\varphi^*\biggl(\dfrac{x-p}{\gamma}\biggr)\biggr)
\nonumber\\[3mm]
&=\biggl(\varphi^*\biggl(\dfrac{x-p}{\gamma}\biggr)
\rocky\bas{(-s)}\biggr)^*\biggl(\dfrac{y-q}{\gamma}\biggr).
\label{e:a2}
\end{align}
On the other hand, \eqref{e:xp} and \eqref{e:q} yield 
$(x-p)/\gamma\in(\varphi^*)^{-1}(\RP)$ and $q\in\dom s$, 
respectively. Therefore, since
\eqref{e:q} and Lemma~\ref{l:12}\ref{l:12iii} yield
\begin{equation}
\label{e:sq}
0\leq s(q)=-\bas{(-s)}(q),
\end{equation}
we deduce from \eqref{e:66} that
\begin{equation}
\label{e:g4}
\bigl(s(q)\rocky\varphi^*\bigr)\biggl(\dfrac{x-p}{\gamma}\biggr)
+\biggl(\varphi^*\biggl(\dfrac{x-p}{\gamma}\biggr)
\rocky\bas{(-s)}\biggr)(q)
=0.
\end{equation}
Consequently, it results from \eqref{e:a1}, \eqref{e:a2}, and
Proposition~\ref{p:8}\ref{p:8i} that 
\begin{align}
\label{e:p1}
&\hspace{-.9cm}\bigl(s(q)\rocky\varphi^*\bigr)^*(p)
+\bigl(s(q)\rocky\varphi^*\bigr)\biggl(\dfrac{x-p}{\gamma}\biggr)
+\biggl(\varphi^*\biggl(\dfrac{x-p}{\gamma}\biggr)
\rocky\bas{(-s)}\biggr)(q)
+\biggl(\varphi^*\biggl(\dfrac{x-p}{\gamma}\biggr)\rocky
\bas{(-s)}\biggr)^*\biggl(\dfrac{y-q}{\gamma}\biggr)
\nonumber\\
&\hspace{5cm}=
\bigl(s(q)\rocky\varphi^*\bigr)^*(p)+
\biggl(\varphi^*\biggl(\dfrac{x-p}{\gamma}\biggr)\rocky
\bas{(-s)}\biggr)^*\biggl(\dfrac{y-q}{\gamma}\biggr)\nonumber\\
&\hspace{5cm}=(\varphi\persp s)(p,q)+(\varphi\persp s)^*
\biggl(\dfrac{x-p}{\gamma},\dfrac{y-q}{\gamma}\biggr)\nonumber\\
&\hspace{5cm}=\Scal{p}{\frac{x-p}{\gamma}}
+\Scal{q}{\frac{y-q}{\gamma}}.
\end{align}
We therefore derive from Lemma~\ref{l:fy} and 
Lemma~\ref{l:66}\ref{l:66i} that 
\begin{equation}
\label{e:x1}
\bigl(s(q)\rocky\varphi^*\bigr)^*(p)+
\bigl(s(q)\rocky\varphi^*\bigr)^{**}\biggl(\dfrac{x-p}{\gamma}\biggr)
=\Scal{p}{\dfrac{x-p}{\gamma}}
\end{equation}
and
\begin{equation}
\label{e:x2}
\biggl(\varphi^*\biggl(\dfrac{x-p}{\gamma}\biggr)
\rocky\bas{(-s)}\biggr)(q)+
\biggl(\varphi^*\biggl(\dfrac{x-p}{\gamma}\biggr)\rocky\bas{(-s)}
\biggr)^*
\biggl(\dfrac{y-q}{\gamma}\biggr)
=\Scal{q}{\dfrac{y-q}{\gamma}}.
\end{equation}
In turn, \eqref{e:x1} and 
Lemma~\ref{l:j1}\ref{l:j1ii-}--\ref{l:j1ii} yield
\begin{equation}
\label{e:23}
p=\prox_{\gamma (s(q)\rocky\varphie)^{\setoile}}x=x-\gamma\,
\prox_{\frac{s(q)}{\gamma}\rocky\varphie}
\biggl(\dfrac{x}{\gamma}\biggr),
\end{equation}
while \eqref{e:x2} and Lemma~\ref{l:j1}\ref{l:j1ii-} yield
\begin{equation}
\label{e:24}
q=\prox_{\gamma\varphie((x-p)/\gamma)\rocky\bas{(-s)}}\,y.
\end{equation}
Upon combining \eqref{e:23} and \eqref{e:24}, we obtain
\begin{equation}
\label{e:25}
q=\prox_{\gamma\varphie
\bigl(\prox_{\frac{s(q)}{\gamma}\rocky\varphie}
\bigl(\frac{x}{\gamma}\bigr)\bigr)\rocky\bas{(-s)}}\,y.
\end{equation}
Consequently, we deduce from \eqref{e:sq}
that $\eta=s(q)\in\RP$ solves \eqref{e:mainr}, from which 
\eqref{e:Kj} follows. To establish the uniqueness of the solution
to \eqref{e:mainr}, define
\begin{equation}
\begin{cases}
\label{e:r-}
\phi_1\colon\RP\to\RPX\colon\eta\mapsto
\varphi^*\Big(\prox_{\frac{\eta}{\gamma}\rocky\varphie}
\bigl(\frac{x}{\gamma}\bigr)\Big)\\[4mm]
\phi_2\colon\RPX\to\RMX\colon\mu\mapsto
\begin{cases}
\bas{(-s)}\bigl(\prox_{\gamma\mu\rocky\bas{(-s)}}\,y\bigr),
&\text{if}\:\:\mu<\pinf;\\
\inf\bas{(-s)},&\text{if}\:\:\mu=\pinf,
\end{cases}
\end{cases}
\end{equation}
and note that $\phi_2$ is well defined, since 
Lemma~\ref{l:j1}\ref{l:j1-+i} and Lemma~\ref{l:12}\ref{l:12ii}
yield, for every $\mu\in\RP$,
$\prox_{\gamma 
\mu\rocky\bas{(-s)}}y\in\cdom\bas{(-s)}=\dom\bas{(-s)}$.
Since \eqref{e:mainr} is equivalent to $\psi(\eta)=0$, 
where $\psi=\phi_2\circ\phi_1+\Id\colon\RP\to\RM$, let us prove
that $\psi$ is continuous and strictly increasing in $\RP$. Indeed, 
if $\proj_{\cdom\varphi^*}(x/\gamma)\in\dom\varphi^*$, 
Lemma~\ref{l:j1}\ref{l:j1-i} yields $\phi_1(\RP)\subset\RP$ and
Lemma~\ref{l:j2}\ref{l:j2i}--\ref{l:j2iii} implies that $\psi$ is 
strictly increasing and continuous. On the other hand, if 
$\proj_{\cdom\varphi^*}(x/\gamma)\notin\dom \varphi^*$, it follows 
from Lemma~\ref{l:j2}\ref{l:j2iii} that 
\begin{equation}
\phi_1(\eta)=\varphie\biggl(\prox_{\frac{\eta}{\gamma}\rocky\varphie}
\Big(\frac{x}{\gamma}\Big)\biggr)\uparrow\phi_1(0)
=\pinf\quad\text{as}\quad 
\eta\downarrow0
\end{equation}
and Lemma~\ref{l:j2}\ref{l:j2i} and
\cite[Proposition~12.33(i)]{Livre1} yield
\begin{align}
\label{e:late}
\phi_2\bigl(\phi_1(\eta)\bigr)&=\bas{(-s)}\biggl(\prox_{\gamma
\varphie\bigl(\prox_{\frac{\eta}{\gamma}\rocky\varphie}
\bigl(\frac{x}{\gamma}\bigr)\bigr)\rocky\bas{(-s)}}\,y\biggr)\downarrow
\inf\bas{(-s)}=\phi_2\bigl(\phi_1(0)\bigr)\quad\text{as}\quad 
\eta\downarrow0. 
\end{align}
Moreover, since $S\neq\emp$, we have $\inf\bas{(-s)}\in\RMMX$ and 
deduce from \eqref{e:late} that any solution to \eqref{e:mainr} is 
strictly positive.
Altogether, \eqref{e:mainr} has at most one solution in $\RP$.

\ref{t:2ii}: 
This follows from Proposition~\ref{p:2}\ref{p:2ii}
and \cite[Proposition~24.11]{Livre1}.

\ref{t:2iii}: 
Lemma~\ref{l:j1}\ref{l:j1ii} asserts that
\begin{equation}
\label{e:34b}
(p,q)=(x,y)-\gamma\,\prox_{(\varphi\persp s)^{\setoile}/\gamma}
\biggl(\dfrac{x}{\gamma},\dfrac{y}{\gamma}\biggr). 
\end{equation}
On the other hand, Lemma~\ref{l:24}\ref{l:24v} yields
$\cconv S=\cdom\haut{s}$. It therefore follows from 
Lemma~\ref{l:24}\ref{l:24i} and 
\cite[Proposition~13.49]{Livre1}
that
\begin{equation}
\label{e:ss}
\rec\bigl({\haute{s}}\bigr)=\sigma_{\dom\haut{s}}
=\sigma_{\cdom\haut{s}}=\sigma_{\cconv S}.
\end{equation}
Moreover, items \ref{l:24i}, \ref{l:24iii}, and \ref{l:24iv} in 
Lemma~\ref{l:24} yield
\begin{equation}
\label{e:iiia1}
0\leq\haut{s}\in\Gamma_0(\GG)\quad\text{and}\quad 
(\haut{s})^{-1}(\RPP)=S\neq\emp,
\end{equation}
while Lemma~\ref{l:12}\ref{l:12iii} yields
\begin{equation}
\label{e:iiia2}
\bas{\varphi^*}=\varphi^*\in\Gamma_0(\HH)\quad\text{and}\quad 
(\varphi^*)^{-1}(\RMM)\neq\emp.
\end{equation}
In turn, by virtue of 
Proposition~\ref{p:3}\ref{p:3iii}, \eqref{e:ss}, and 
Proposition~\ref{p:2}\ref{p:2iii}, we obtain
\begin{equation}
\label{e:n}
(\varphi\persp s)^*\colon(x^*,y^*)
\mapsto\bigl(\haute{s}\persp(-\varphi^*)\bigr)(y^*,x^*). 
\end{equation}
Now set
$(r,t)=\prox_{(\varphi\persp 
s)^{\setoile}/\gamma}(x/\gamma,y/\gamma)$. 
Then we derive from \eqref{e:n} and
\cite[Proposition~24.8(iv)]{Livre1} that 
\begin{equation}
(t,r)=\prox_{(\haute{s}\persp(-\varphi^{\setoile}))/\gamma}
\biggl(\dfrac{y}{\gamma},\dfrac{x}{\gamma}\biggr). 
\end{equation}
Therefore, \eqref{e:iiia1}, \eqref{e:iiia2}, and
\ref{t:2i} imply that 
\begin{equation}
(t,r)=\biggl(\frac{y}{\gamma}-\frac{1}{\gamma}
\prox_{\gamma\eta\rocky\haut{s}}y,
\prox_{\frac{1}{\gamma}\haut{s}
(\prox_{\gamma\eta\rocky\haut{s}}y)\rocky\varphie}
\biggl(\frac{x}{\gamma}\biggr)\biggr),
\end{equation}
where $\eta$ is the unique solution in $\RP$ to
\eqref{e:mainriii}. The conclusion then comes from \eqref{e:34b}.
\end{proof}

Next, we provide explicit formulas for 
$\prox_{\gamma(\varphi\persp s)}(x,y)$ in items \ref{t:2i} and 
\ref{t:2iii} of
Theorem~\ref{t:2} (item \ref{t:2ii} is already explicit).

\begin{proposition}
\label{p:1}
Consider the assumptions and notation of 
Theorem~\ref{t:2}\ref{t:2i}, and set
\begin{equation}
\label{e:Omega_i}
\hskip -3.0mm
\begin{cases}
\Omega_1=\mEnge{(u,v)\in\HH\times\GG}
{\varphi^*\Big(\proj_{\cdom\varphie}
\Big(\dfrac{u}{\gamma}\Big)\Big)=0\;\;\text{and}\;\;
s(\proj_{\overline{S}}v)=0}\\[4mm]
\Omega_2=\mEnge{(u,v)\in\HH\times\GG}
{\varphi^*\Big(\proj_{\cdom\varphie}
\Big(\dfrac{u}{\gamma}\Big)\Big)\in\RPP
\;\;\text{and}\;
s\Big(\prox_{\gamma\varphie\bigl(\proj_{\cdom\varphie}
\bigl(\frac{u}{\gamma}\bigr)\bigr)\bas{(-s)}}\,v\Big)=0}\\[4mm]
\Omega_3=\mEnge{(u,v)\in\HH\times\GG}
{\varphi^*\Big(\prox_{\frac{s(\proj_{\overline{S}}v)}{\gamma}
\varphie}\Big(\dfrac{u}{\gamma}\Big)\Big)=0
\;\;\text{and}\;\;s(\proj_{\overline{S}}v)\in\RPP}\\[4mm]
\Omega_4=(\HH\times\GG)\smallsetminus
(\Omega_1\cup\Omega_2\cup\Omega_3).
\end{cases}
\end{equation}
Then exactly one of the following holds:
\begin{enumerate}
\item 
\label{p:1i}
$(x,y)\in\Omega_1$, 
$\eta=0$, and $\prox_{\gamma(\varphi\persp s)}(x,y)
=\Big(x-\gamma\,\proj_{\cdom\varphie}
(x/{\gamma}),\proj_{\overline{S}}y\Big)$.
\item 
\label{p:1iii}
$(x,y)\in\Omega_2$, $\eta=0$, and 
$\prox_{\gamma(\varphi\persp s)}(x,y)
=\Big(x-\gamma\,\proj_{\cdom\varphie}
(x/{\gamma}),
\prox_{\gamma\varphie\bigl(\proj_{\cdom\varphie}
(x/{\gamma})\bigr)\bas{(-s)}}\, y\Big)$.
\item 
\label{p:1ii}
$(x,y)\in\Omega_3$, 
$\eta=s(\proj_{\overline{S}}y)>0$, and 
$\prox_{\gamma(\varphi\persp s)}(x,y)=\Big(x-\gamma\,
\prox_{\frac{s(\proj_{\overline{S}}y)}{\gamma}\varphie}
(x/{\gamma}),\proj_{\overline{S}}y\Big)$.
\item 
\label{p:1iv}
$(x,y)\in\Omega_4$, $\eta>0$ solves 
\begin{equation}
\label{e:c6}
\eta=s\biggl(\prox_{\gamma
\varphie\bigl(\prox_{\frac{\eta}{\gamma}\varphie}
\bigl(\frac{x}{\gamma}\bigr)\bigr)\bas{(-s)}}\,y\biggr),
\end{equation}
and 
\begin{equation}
\label{e:c7}
\prox_{\gamma(\varphi\persp s)}(x,y)
=\biggl(x-\gamma\,
\prox_{\frac{\eta}{\gamma}\varphie}
\Big(\frac{x}{\gamma}\Big),\prox_{\gamma
\varphie\bigl(\prox_{\frac{\eta}{\gamma}\varphie}
\bigl(\frac{x}{\gamma}\bigr)\bigr)\bas{(-s)}}\, y\biggr).
\end{equation}
\end{enumerate}
\end{proposition}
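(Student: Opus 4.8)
\emph{Proof plan.} The plan is to build directly on Theorem~\ref{t:2}\ref{t:2i}, which already supplies the unique $\eta\in\RP$ solving \eqref{e:mainr} together with the formula \eqref{e:Kj}, and to split into four cases according to whether $\eta=0$ or $\eta>0$ and whether the scalar $\alpha:=\varphi^*(\pi)$ vanishes, where $\pi:=\prox_{\frac{\eta}{\gamma}\rocky\varphie}(x/\gamma)$ is the inner proximal point occurring in \eqref{e:Kj}. Each of the four cases will be matched with exactly one of $\Omega_1,\Omega_2,\Omega_3,\Omega_4$. The driving mechanism is the degeneration of $\rocky$ recorded in Lemma~\ref{l:j1}\ref{l:j1-i}: $\prox_{\frac{\eta}{\gamma}\rocky\varphie}$ equals $\proj_{\cdom\varphie}$ if $\eta=0$ and $\prox_{\frac{\eta}{\gamma}\varphie}$ if $\eta>0$, while $\prox_{\gamma\alpha\rocky\bas{(-s)}}$ equals $\proj_{\cdom\bas{(-s)}}$ if $\alpha=0$ and $\prox_{\gamma\alpha\bas{(-s)}}$ if $\alpha>0$; here $\bas{(-s)}\in\Gamma_0(\GG)$ and $\cdom\bas{(-s)}=\overline{S}$ by Lemma~\ref{l:12}\ref{l:12i}--\ref{l:12ii} and the closedness of $\overline{S}$. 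Substituting these identities into \eqref{e:Kj} produces the four proximal formulas in items \ref{p:1i}--\ref{p:1iv}, and, in the case $\eta>0$ and $\alpha>0$, turns \eqref{e:mainr} into the fixed point equation \eqref{e:c6}, since every $\rocky$ then reduces to ordinary scalar multiplication.

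Two facts, both available from the proof of Theorem~\ref{t:2}\ref{t:2i}, make the case identification work. First, writing $q$ for the second component of \eqref{e:Kj}, equation \eqref{e:mainr} reads $\bas{(-s)}(q)=-\eta\le 0$, so $q\in\dom\bas{(-s)}$, and Lemma~\ref{l:12}\ref{l:12ii}--\ref{l:12iii} give $-s(q)=\bas{(-s)}(q)$, i.e.\ $\eta=s(q)$. Second, \eqref{e:xp} guarantees $\alpha=\varphi^*(\pi)\in\RP$, so $\alpha$ is a genuine nonnegative real number (in particular, when $\eta=0$ this forces $\proj_{\cdom\varphie}(x/\gamma)\in\dom\varphie$). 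Consequently: when $\eta=0$ one has $\pi=\proj_{\cdom\varphie}(x/\gamma)$ and $s(q)=-\bas{(-s)}(q)=0$, and then $\alpha=0$ forces $q=\proj_{\overline{S}}y$ and places $(x,y)$ in $\Omega_1$, whereas $\alpha>0$ places $(x,y)$ in $\Omega_2$; when $\eta>0$ one has $s(q)=\eta>0$ and $\pi=\prox_{\frac{\eta}{\gamma}\varphie}(x/\gamma)$, and then $\alpha=0$ again forces $q=\proj_{\overline{S}}y$, whence $\eta=s(\proj_{\overline{S}}y)\in\RPP$ and $(x,y)\in\Omega_3$, whereas $\alpha>0$ gives $\eta=s(q)$ in the form \eqref{e:c6}. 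In each case \eqref{e:Kj} collapses to the asserted expression.

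Finally, one checks that $\Omega_1,\Omega_2,\Omega_3,\Omega_4$ partition $\HH\times\GG$, which promotes the analysis above to the ``exactly one'' conclusion. Coverage is by the definition of $\Omega_4$. For disjointness: $\Omega_1$ and $\Omega_2$ impose $\varphi^*(\proj_{\cdom\varphie}(x/\gamma))=0$ versus $\in\RPP$; $\Omega_1$ and $\Omega_3$ impose $s(\proj_{\overline{S}}y)=0$ versus $\in\RPP$; and a common point of $\Omega_2$ and $\Omega_3$ would, by the identifications just made, force $\eta$ to equal both $0$ and $s(\proj_{\overline{S}}y)>0$, contradicting the uniqueness in Theorem~\ref{t:2}\ref{t:2i}. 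The statements $\eta=0$ in \ref{p:1i}--\ref{p:1iii} and $\eta>0$ in \ref{p:1iv} are then forced, since membership of $(x,y)$ in $\Omega_4$ excludes all three non-generic alternatives. I expect the only real difficulty to be the careful bookkeeping of the degenerate reductions of $\rocky$ to projections together with the use of \eqref{e:xp} to keep $\varphi^*$ finite at $\pi$, and the airtight handling of the reverse implications needed for the ``exactly one'' clause; no conceptually new ingredient appears to be required.
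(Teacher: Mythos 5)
Your proposal is correct and follows essentially the same route as the paper: both reduce \eqref{e:mainr} to the fixed-point equation $\eta=s\big(\prox_{\gamma\varphie(\cdot)\rocky\bas{(-s)}}y\big)$ via Lemma~\ref{l:12}, use Lemma~\ref{l:j1}\ref{l:j1-i} to collapse the $\rocky$ operations according to the signs of $\eta$ and of $\alpha=\varphi^*(\pi)$, and invoke the uniqueness of $\eta$ from Theorem~\ref{t:2}\ref{t:2i} to obtain the partition. The only organizational difference is the direction of the case split --- you go from the sign pattern of $(\eta,\alpha)$ to membership in the $\Omega_i$, whereas the paper starts from membership in each $\Omega_i$ and verifies the candidate $\eta$ --- so in a full write-up you must make the reverse implications (e.g., $(x,y)\in\Omega_2\Rightarrow\eta=0$) explicit, since they are what guarantees $\Omega_2\cap\Omega_3=\emp$ and that the case $\eta>0$, $\alpha>0$ actually lands in $\Omega_4$; these follow from the same computations together with uniqueness, exactly as in the paper.
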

\begin{proof} 
Lemma~\ref{l:12}\ref{l:12ii} yields
\begin{equation}
\label{e:7}
\dom\bas{(-s)}=\overline{(-s)^{-1}(\RMM)}=\overline{S}=s^{-1}(\RP).
\end{equation}
Hence, it follows from Lemma~\ref{l:j1}\ref{l:j1-+i} that
\begin{equation}
(\forall\mu\in\RP)\quad 
\prox_{\mu\rocky\bas{(-s)}}\,y\in s^{-1}(\RP).
\end{equation}
Therefore, Lemma~\ref{l:12}\ref{l:12iii} implies that
\eqref{e:mainr} is equivalent to 
\begin{equation}
\label{e:auxs}
\eta=s\biggl(\prox_{\gamma
\varphie\bigl(\prox_{\frac{\eta}{\gamma}\rocky\varphie}
\bigl(\frac{x}{\gamma}\bigr)\bigr)\rocky\bas{(-s)}}\,y\biggr).
\end{equation}

\ref{p:1i}: Since Lemma~\ref{l:j1}\ref{l:j1-i} and \eqref{e:7}
yield
\begin{equation}
s\biggl(\prox_{\gamma
\varphie\bigl(\proj_{\cdom\varphie}\bigl(\frac{x}{\gamma}\bigr)\bigr)
\rocky\bas{(-s)}}\,y\biggr)=s\bigl(\proj_{\overline{S}}y\bigr)=0,
\end{equation}
we deduce from \eqref{e:auxs} and Lemma~\ref{l:j1}\ref{l:j1-i} that
$\eta=0$. The claim therefore follows from \eqref{e:Kj}.

\ref{p:1iii}: Lemma~\ref{l:j1}\ref{l:j1-i} yields
\begin{equation}
s\biggl(\prox_{\gamma
\varphie\bigl(\proj_{\cdom\varphie}
\bigl(\frac{x}{\gamma}\bigr)\bigr)\rocky\bas{(-s)}}\,y\biggr)
=s\biggl(\prox_{\gamma\varphie\bigl(\proj_{\cdom\varphie}
\bigl(\frac{x}{\gamma}\bigr)\bigr)\bas{(-s)}}\,y\biggr)
=0,
\end{equation}
and we deduce from \eqref{e:auxs} and Lemma~\ref{l:j1}\ref{l:j1-i}
that $\eta=0$. Therefore, the claim follows from \eqref{e:Kj}.

\ref{p:1ii}: Since $s(\proj_{\overline{S}}y)>0$, 
Lemma~\ref{l:j1}\ref{l:j1-i} and \eqref{e:7} yield
\begin{align}
s\biggl(\prox_{\gamma
\varphie\Big(\prox_{\frac{s(\proj_{\overline{S}}y)}{\gamma}
\rocky\varphie}\bigl(\frac{x}{\gamma}\bigr)\Big)
\rocky\bas{(-s)}}\,y\biggr)&=s\biggl(\prox_{\gamma
\varphie\Big(\prox_{\frac{s(\proj_{\overline{S}}y)}{\gamma}
\varphie}\bigl(\frac{x}{\gamma}\bigr)\Big)\rocky\bas{(-s)}}\,y\biggr)
\nonumber\\
&=s\bigl(\proj_{\overline{S}}y\bigr),
\end{align}
and we deduce from \eqref{e:auxs} that 
$\eta=s(\proj_{\overline{S}}y)>0$. Therefore, the claim follows
from \eqref{e:Kj}.

\ref{p:1iv}: Suppose that $\eta=0$, hence
$\varphi^*(\proj_{\cdom\varphie}(x/\gamma))<\pinf$. 
Then it follows from \eqref{e:auxs} that 
\begin{equation}
\label{e:aux23}
0=s\biggl(\prox_{\gamma
\varphie\bigl(\proj_{\cdom\varphie}
\bigl(\frac{x}{\gamma}\bigr)\bigr)\rocky\bas{(-s)}}\,y\biggr).
\end{equation}
Therefore, if $\varphi^*(\proj_{\cdom\varphie}(x/\gamma))=0$,
then \eqref{e:aux23} yields $0=s(\proj_{\overline{S}}y)$, which
implies that $(x,y)\in\Omega_1$. On the other hand, if
$\varphi^*(\proj_{\cdom\varphie}(x/\gamma))\in\RPP$, then
\eqref{e:aux23} yields
\begin{equation}
0=s\Big(\prox_{\gamma\varphie\bigl(\proj_{\cdom\varphie}
\bigl(\frac{x}{\gamma}\bigr)\bigr)\bas{(-s)}}\,y\Big)
\end{equation}
and thus $(x,y)\in\Omega_2$. 
However, since $(x,y)\in\Omega_4$, we have
$(x,y)\notin\Omega_1\cup\Omega_2$ and obtain a contradiction.
This shows that $\eta>0$. In turn, \eqref{e:auxs} reduces to 
\begin{equation}
\label{e:auxs2}
\eta=s\biggl(\prox_{\gamma
\varphie\bigl(\prox_{\frac{\eta}{\gamma}\varphie}
\bigl(\frac{x}{\gamma}\bigr)\bigr)\rocky\bas{(-s)}}\,y\biggr).
\end{equation}
Hence, if 
$\varphi^*(\prox_{\frac{\eta}{\gamma}\varphie}(x/\gamma))=0$, 
we deduce from \eqref{e:7} that $0<\eta=s(\proj_{\overline{S}}y)$,
which yields $(x,y)\in\Omega_3$. However, since 
$(x,y)\in\Omega_4$, we have
$\varphi^*(\prox_{\frac{\eta}{\gamma}\varphie}(x/\gamma))\in\RPP$.
Consequently, the claim follows from Lemma~\ref{l:j1}\ref{l:j1-i}. 

Finally, it is clear from \eqref{e:Omega_i} that 
$\Omega_1\cap\Omega_2=\emp$ and $\Omega_1\cap\Omega_3=\emp$.
Moreover, we infer from \ref{p:1iii} and \ref{p:1ii} that
$\Omega_2\cap\Omega_3=\emp$. Altogether, 
$(\Omega_i)_{1\leq i\leq 4}$ is a partition of $\HH\times\GG$ and 
the proof is complete.
\end{proof}

\begin{proposition}
\label{p:1p}
Consider the assumptions and notation of 
Theorem~\ref{t:2}\ref{t:2iii}, and set
\begin{equation}
\label{e:Omega_pi}
\hskip -3.0mm
\begin{cases}
\Xi_1=\mEnge{(u,v)\in\HH\times\GG}
{\haut{s}(\proj_{\cconv S} 
v)=0\;\;\text{and}\;\;
\varphi^*\biggl(\proj_{\cdom\varphi^*}\,
\Big(\dfrac{u}{\gamma}\Big)\biggr)=0}\\[4mm]
\Xi_2=\mEnge{(u,v)\in\HH\times\GG}
{\haut{s}(\proj_{\cconv S} v)\in\RPP
\;\;\text{and}\;
\varphi^*\biggl(\prox_{\frac{1}{\gamma}
\haut{s}(\proj_{\cconv S} v)\varphi^*}\,
\Big(\dfrac{u}{\gamma}\Big)\biggr)=0}\\[4mm]
\Xi_3=\mEnge{(u,v)\in\HH\times\GG}
{\haut{s}\Big(\prox_{\gamma(-\varphi^*(
\proj_{\cdom\varphi^*}(\frac{u}{\gamma})))\haut{s}}v\Big)=0
\;\;\text{and}\;\;\varphi^*\biggl(\proj_{\cdom\varphi^*}\,
\Big(\dfrac{u}{\gamma}\Big)\biggr)<0}\\[4mm]
\Xi_4=(\HH\times\GG)\smallsetminus
(\Xi_1\cup\Xi_2\cup\Xi_3).
\end{cases}
\end{equation}
Then exactly one of the following holds:
\begin{enumerate}
\item 
\label{p:1pi}
$(x,y)\in\Xi_1$, 
$\eta=0$, and $\prox_{\gamma(\varphi\persp s)}(x,y)
=\Big(x-\gamma\,\proj_{\cdom\varphie}
(x/{\gamma}),\proj_{\cconv S}y\Big)$.
\item 
\label{p:1piii}
$(x,y)\in\Xi_2$, $\eta=0$, and 
$\prox_{\gamma(\varphi\persp s)}(x,y)
=\Big(x-\gamma\,\prox_{\frac{1}{\gamma}\haut{s}
(\proj_{\cconv S}y)\varphi^*}
(x/\gamma),
\proj_{\cconv S}y\Big)$.
\item 
\label{p:1pii}
$(x,y)\in\Xi_3$, 
$\eta=-\varphi^*\bigl(\proj_{\cdom\varphie}\,
\bigl(x/\gamma\bigr)\bigr)>0$, and 
\begin{equation}
\prox_{\gamma(\varphi\persp s)}(x,y)=\Big(x-\gamma\,
\proj_{\cdom\varphie}
\Big(\frac{x}{\gamma}\Big),
\prox_{\bigl(-\gamma\varphie\bigl(\proj_{\cdom\varphie}\,
\bigl(\frac{x}{\gamma}\bigr)\bigr)\bigr)\haut{s}}y\Big).
\end{equation}
\item 
\label{p:1piv}
$(x,y)\in\Xi_4$, $\eta>0$ solves 
\begin{equation}
\label{e:mainriiia}
\varphi^*\biggl(\prox_{\frac{1}{\gamma}
\haut{s}(\prox_{\gamma\eta\haut{s}} y)\varphie}\,
\Big(\frac{x}{\gamma}\Big)\biggr)+\eta=0
\end{equation}
and
\begin{equation}
\label{e:Kjiiia}
\prox_{\gamma(\varphi\persp s)}(x,y)
=\biggl(x-\gamma\,\prox_{\frac{1}{\gamma}\haut{s}
(\prox_{\gamma\eta\haut{s}}y)\varphie}
\Big(\frac{x}{\gamma}\Big),
\prox_{\gamma\eta\haut{s}}y\biggr).
\end{equation}
\end{enumerate}
\end{proposition}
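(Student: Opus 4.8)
The statement is a reformulation of Theorem~\ref{t:2}\ref{t:2iii}: the existence and uniqueness of $\eta\in\RP$ solving \eqref{e:mainriii}, and formula \eqref{e:Kjiii}, are already in hand, so the only task is to identify, according to the position of $(x,y)$, which of the nested proximal operators appearing in \eqref{e:mainriii} and \eqref{e:Kjiii} degenerate. The plan is to mimic the proof of Proposition~\ref{p:1}, running a two-parameter case analysis on whether $\eta=0$ and on whether $\haut{s}$ vanishes at the inner proximal point $\prox_{\gamma\eta\rocky\haut{s}}y$. The recurring tools are Lemma~\ref{l:24} ($\haut{s}\in\Gamma_0(\GG)$, $\haut{s}\ge 0$, $\cdom\haut{s}=\dom\haut{s}=\cconv S$ because $\cconv S$ is closed, $\haut{s}(\cconv S)\subset\RP$, and $\haut{s}=s$ on $S$), the standing hypothesis that $\varphi^*\in\Gamma_0(\HH)$ is finite and $\le 0$ on $\dom\varphi^*$, Lemma~\ref{l:j1}\ref{l:j1-+i} (so $\prox_{\gamma\eta\rocky\haut{s}}y\in\cconv S$, whence $\haut{s}$ of it lies in $\RP$), and above all Lemma~\ref{l:j1}\ref{l:j1-i}, which replaces $0\rocky f$ by $\iota_{\cdom f}$, hence $\prox_{0\rocky f}$ by $\proj_{\cdom f}$, and $\mu\rocky f$ by $\mu f$ for $\mu>0$; this last fact is what unwinds the $\rocky$ symbols once the relevant multipliers are known to vanish or not.

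First I would dispose of the three degenerate cases. If $(x,y)\in\Xi_1$, I test $\eta=0$ in \eqref{e:mainriii}: then $\prox_{0\rocky\haut{s}}y=\proj_{\cconv S}y$, the hypothesis $\haut{s}(\proj_{\cconv S}y)=0$ makes the outer multiplier $\tfrac{1}{\gamma}\haut{s}(\proj_{\cconv S}y)$ vanish, so $\prox_{0\rocky\varphi^*}(x/\gamma)=\proj_{\cdom\varphi^*}(x/\gamma)$, and the remaining hypothesis $\varphi^*(\proj_{\cdom\varphi^*}(x/\gamma))=0$ makes the left-hand side of \eqref{e:mainriii} equal $0$; by the uniqueness in Theorem~\ref{t:2}\ref{t:2iii}, $\eta=0$, and \eqref{e:Kjiii} collapses to \ref{p:1pi}. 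If $(x,y)\in\Xi_2$, the same test with $\eta=0$ works, except that now $\haut{s}(\proj_{\cconv S}y)>0$, so $\tfrac{1}{\gamma}\haut{s}(\proj_{\cconv S}y)\rocky\varphi^*$ is a genuine multiplication and \eqref{e:Kjiii} gives \ref{p:1piii}. If $(x,y)\in\Xi_3$, I test $\eta=-\varphi^*(\proj_{\cdom\varphi^*}(x/\gamma))>0$: then $\gamma\eta\rocky\haut{s}=\gamma\eta\haut{s}$ and the hypothesis $\haut{s}(\prox_{\gamma(-\varphi^*(\proj_{\cdom\varphi^*}(x/\gamma)))\haut{s}}y)=0$ kills the outer multiplier, so the argument of the outer $\varphi^*$ reduces to $\proj_{\cdom\varphi^*}(x/\gamma)$ and the left-hand side of \eqref{e:mainriii} becomes $\varphi^*(\proj_{\cdom\varphi^*}(x/\gamma))+\eta=0$; uniqueness then gives $\eta=-\varphi^*(\proj_{\cdom\varphi^*}(x/\gamma))$, and \eqref{e:Kjiii} yields \ref{p:1pii}.

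Next I would treat $(x,y)\in\Xi_4$, the complement of $\Xi_1\cup\Xi_2\cup\Xi_3$. If $\eta=0$, then \eqref{e:mainriii} reads $\varphi^*(\prox_{\frac{1}{\gamma}\haut{s}(\proj_{\cconv S}y)\rocky\varphi^*}(x/\gamma))=0$, and splitting on whether $\haut{s}(\proj_{\cconv S}y)$ is zero or positive (applying Lemma~\ref{l:j1}\ref{l:j1-i}) places $(x,y)$ in $\Xi_1$ or in $\Xi_2$, a contradiction; hence $\eta>0$ and $\prox_{\gamma\eta\rocky\haut{s}}y=\prox_{\gamma\eta\haut{s}}y$. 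If moreover $\haut{s}(\prox_{\gamma\eta\haut{s}}y)=0$, then \eqref{e:mainriii} forces $\varphi^*(\proj_{\cdom\varphi^*}(x/\gamma))=-\eta<0$ and, since $\gamma\eta=-\gamma\varphi^*(\proj_{\cdom\varphi^*}(x/\gamma))$, the inner point $\prox_{\gamma\eta\haut{s}}y$ coincides with $\prox_{\gamma(-\varphi^*(\proj_{\cdom\varphi^*}(x/\gamma)))\haut{s}}y$, so both defining conditions of $\Xi_3$ hold, again a contradiction; hence $\haut{s}(\prox_{\gamma\eta\haut{s}}y)>0$, the outer multiplier is positive, \eqref{e:mainriii} becomes \eqref{e:mainriiia}, and \eqref{e:Kjiii} becomes \eqref{e:Kjiiia}. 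Finally, $\Xi_1\cap\Xi_2=\emp$ and $\Xi_1\cap\Xi_3=\emp$ are immediate (the former from the value of $\haut{s}(\proj_{\cconv S}v)$, the latter from the sign of $\varphi^*(\proj_{\cdom\varphi^*}(u/\gamma))$), while $\Xi_2\cap\Xi_3=\emp$ because \ref{p:1piii} gives $\eta=0$ on $\Xi_2$ and \ref{p:1pii} gives $\eta>0$ on $\Xi_3$; together with the definition of $\Xi_4$ this makes $(\Xi_i)_{1\le i\le 4}$ a partition of $\HH\times\GG$, which is the ``exactly one'' clause.

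The step most prone to slips is the $\Xi_4$ argument: one must make sure that, in the branch $\haut{s}(\prox_{\gamma\eta\haut{s}}y)=0$, the point $\prox_{\gamma\eta\haut{s}}y$ is \emph{literally} the point occurring in the definition of $\Xi_3$, which is exactly the identity $\gamma\eta=-\gamma\varphi^*(\proj_{\cdom\varphi^*}(x/\gamma))$ read off from \eqref{e:mainriii}, so that the contradiction with $(x,y)\notin\Xi_3$ is genuine. A shorter-to-conceive but more substitution-heavy alternative is to invoke Proposition~\ref{p:1} directly for the perspective $\haute{s}\persp(-\varphi^*)$, whose proximity operator is linked to that of $\varphi\persp s$ through \eqref{e:n} and Moreau's decomposition \eqref{e:34b}, exactly the mechanism used in the proof of Theorem~\ref{t:2}\ref{t:2iii}, and then to transcribe the partition $(\Omega_i)_{1\le i\le4}$ and the formulas back through \eqref{e:34b}; I would keep the direct, self-contained route since it is shorter to write out and parallels the proof of Proposition~\ref{p:1}.
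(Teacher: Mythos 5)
Your proposal is correct and follows essentially the same route as the paper's own proof: test the candidate value of $\eta$ in \eqref{e:mainriii} for each of $\Xi_1,\Xi_2,\Xi_3$ and invoke the uniqueness from Theorem~\ref{t:2}\ref{t:2iii}, then rule out $\eta=0$ and the vanishing of $\haut{s}(\prox_{\gamma\eta\haut{s}}y)$ on $\Xi_4$ by contradiction with membership in $\Xi_1\cup\Xi_2\cup\Xi_3$, and finish with the same disjointness checks to obtain the partition. The subtlety you flag in the $\Xi_4$ branch (that $\prox_{\gamma\eta\haut{s}}y$ is literally the point in the definition of $\Xi_3$ via $\eta=-\varphi^*(\proj_{\cdom\varphie}(x/\gamma))$) is exactly how the paper argues.
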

\begin{proof} 
It follows from Lemma~\ref{l:24}\ref{l:24v} that 
\begin{equation}
\label{e:p7}
\cdom\haut{s}=\cconv S.
\end{equation}

\ref{p:1pi}:
Since Lemma~\ref{l:j1}\ref{l:j1-i} and \eqref{e:p7} yield
\begin{equation}
\label{e:mainriiip}
\varphi^*\biggl(\prox_{\frac{1}{\gamma}
\haut{s}(\proj_{\cconv S} y)\rocky\varphie}\,
\Big(\frac{x}{\gamma}\Big)\biggr)=
\varphi^*\biggl(\proj_{\cdom\varphie}\,
\Big(\frac{x}{\gamma}\Big)\biggr)=0,
\end{equation}
we deduce from \eqref{e:mainriii} and Lemma~\ref{l:j1}\ref{l:j1-i} 
that $\eta=0$. The claim therefore follows from \eqref{e:Kjiii}.

\ref{p:1piii}: Lemma~\ref{l:j1}\ref{l:j1-i} yields
\begin{equation}
\varphi^*\biggl(\prox_{\frac{1}{\gamma}
\haut{s}(\proj_{\cconv S} y)\rocky\varphie}\,
\Big(\frac{x}{\gamma}\Big)\biggr)=\varphi^*\biggl(
\prox_{\frac{1}{\gamma}\haut{s}(\proj_{\cconv S} y)\varphie}\,
\Big(\frac{x}{\gamma}\Big)\biggr)=0,
\end{equation}
and we deduce from \eqref{e:mainriii} and 
Lemma~\ref{l:j1}\ref{l:j1-i}
that $\eta=0$. Therefore, the claim follows from \eqref{e:Kjiii}.

\ref{p:1pii}: Since 
$\varphi^*(\proj_{\cdom\varphie}(\frac{x}{\gamma}))\in\RMM$, 
Lemma~\ref{l:j1}\ref{l:j1-i} yields
\begin{align}
&\varphi^*\biggl(\prox_{\frac{1}{\gamma}
\haut{s}(\prox_{\gamma(-\varphie(
\proj_{\cdom\varphie}(\frac{x}{\gamma})))\rocky\haut{s}} 
y)\rocky\varphie}\,
\Big(\frac{x}{\gamma}\Big)\biggr)\nonumber\\
&\hspace{51mm}=\varphi^*\biggl(\prox_{\frac{1}{\gamma}
\haut{s}(\prox_{\gamma(-\varphie(
\proj_{\cdom\varphie}(\frac{x}{\gamma})))\haut{s}} 
y)\rocky\varphie}\,
\Big(\frac{x}{\gamma}\Big)\biggr)\nonumber\\
&\hspace{51mm}=\varphi^*\biggl(\proj_{\cdom\varphie}\,
\Big(\frac{x}{\gamma}\Big)\biggr)
\end{align}
and we deduce from \eqref{e:mainriii} that 
$\eta=-\varphi^*(\proj_{\cdom\varphie}(x/\gamma))>0$. Therefore,
the claim follows from \eqref{e:Kjiii}.

\ref{p:1piv}: Suppose that $\eta=0$, hence 
$\haut{s}(\proj_{\cconv S} y)<\pinf$. Then it follows from
\eqref{e:mainriii} that 
\begin{equation}
\label{e:auxp23}
0=\varphi^*\biggl(\prox_{\frac{1}{\gamma}
\haut{s}(\proj_{\cconv S} y)\rocky\varphie}\,
\Big(\frac{x}{\gamma}\Big)\biggr).
\end{equation}
Therefore, if $\haut{s}(\proj_{\cconv S} y)=0$,
then \eqref{e:auxp23} yields 
$0=\varphi^*(\proj_{\cdom\varphie}(x/\gamma))$, which
implies that $(x,y)\in\Xi_1$. On the other hand, if
$\haut{s}(\proj_{\cconv S} y)>0$, then
\eqref{e:auxp23} yields
\begin{equation}
0=\varphi^*\biggl(\prox_{\frac{1}{\gamma}
\haut{s}(\proj_{\cconv S} y)\varphie}\,
\Big(\frac{x}{\gamma}\Big)\biggr)
\end{equation}
and thus $(x,y)\in\Xi_2$. 
At the same time, since $(x,y)\in\Xi_4$, we have
$(x,y)\notin\Xi_1\cup\Xi_2$.
This contradiction shows that $\eta>0$. 
In turn, \eqref{e:mainriii} reduces to 
\begin{equation}
\label{e:mainriiiap}
-\eta=\varphi^*\biggl(\prox_{\frac{1}{\gamma}
\haut{s}(\prox_{\gamma\eta\haut{s}} y)\rocky\varphie}\,
\Big(\frac{x}{\gamma}\Big)\biggr).
\end{equation}
Hence, if 
$\haut{s}(\prox_{\gamma\eta\haut{s}} y)=0$, 
$0>-\eta=\varphi^*(\proj_{\cdom\varphie}(x/\gamma))$,
which yields $(x,y)\in\Xi_3$. However, since 
$(x,y)\in\Xi_4$, we have
$\haut{s}(\prox_{\gamma\eta\haut{s}} y)>0$.
Consequently, the claim follows from Lemma~\ref{l:j1}\ref{l:j1-i}. 

To conclude the proof, we observe that \eqref{e:Omega_pi} yields
$\Xi_1\cap\Xi_2=\emp$ and $\Xi_1\cap\Xi_3=\emp$, and 
we infer from \ref{p:1piii} and \ref{p:1pii} that
$\Xi_2\cap\Xi_3=\emp$. Altogether, 
$(\Xi_i)_{1\leq i\leq 4}$ is a partition of $\HH\times\GG$. 
\end{proof}

\begin{remark}
\label{r:i}
In cases \ref{p:1i}--\ref{p:1ii} of Proposition~\ref{p:1}, the
computation of $\prox_{\gamma(\varphi\persp s)}(x,y)$ requires
only the ability to compute the projection operators onto
${\cdom\varphi^*}$ and ${\overline{S}}$, as well as the proximity
operators of ${\varphi^*}$ and ${\bas{(-s)}}$. Examples of explicit
formulas for these operators can be found in \cite{Livre1,Chie18}.
The case \ref{p:1iv} requires additionally the solution
$\eta\in\RPP$ to \eqref{e:c6}. To determine $\eta$, let us define
$\phi_1$ and $\phi_2$ as in \eqref{e:r-} and note that it is the
root of $T=\phi_1\circ\phi_2+\Id$. Since Lemma~\ref{l:j2} implies
that $T$ is strictly increasing and continuous on $\RPP$, $\eta$ 
can be found via standard one-dimensional root-finding routines
\cite[Chapter~9]{Pres07}. A similar observation can be made for 
Proposition~\ref{p:1p}.
\end{remark}

\section{Examples}
\label{sec:5}
We illustrate the proposed computation of the proximity operator of
perspective functions in the context of applications arising in
mean field type control \cite{Ach16a,Ach16b}, optimal transport
\cite{Dolb09,Papa14}, statistics \cite{Ejst20,Owen07}, and 
thermostatistics \cite{Berc13,Lutw05}. These applications share a
radial base function model, which motivates the following two
examples. 

\begin{example}
\label{ex:28} 
Let $\phi\in\Gamma_0(\RR)$ be an even coercive function such 
that $\phi^*(\RR)\subset\RPX$ and $(\phi^*)^{-1}(\RPP)\neq\emp$, 
and 
set $\varphi=\phi\circ\|\cdot\|$.
Then, $\phi^*\in\Gamma_0(\RR)$ is even, $0\in \inte\dom\phi^*$ by 
\cite[Proposition~14.16]{Livre1},
$\varphi\in\Gamma_0(\HH)$, and
\cite[Example~13.8]{Livre1} implies that 
$\varphi^*=\phi^*\circ\|\cdot\|$.
In turn, $\varphi^*(\HH)\subset\RPX$ and
$(\varphi^*)^{-1}(\RPP)\neq\emp$. 
Furthermore, let $-s\in\Gamma_0(\GG)$, suppose that 
$s^{-1}(\RPP)\neq\emp$, let 
$(x,y)\in\HH\times\GG$, and note that
Proposition~\ref{p:2}\ref{p:2iii} asserts that
\begin{equation}
\label{e:9s5}
(\varphi\persp s)(x,y)=
\begin{cases}
s(y){\phi}\biggl(\dfrac{\|x\|}{s(y)}\biggr),
&\text{if}\;\;0<s(y)<\pinf;\\
(\rec\phi)(\|x\|),&\text{if}\;\;s(y)=0;\\
\pinf,&\text{otherwise}.
\end{cases}
\end{equation}
Now let $\gamma\in\RPP$. It follows from 
Lemma~\ref{l:5}\ref{l:5iii}
and Proposition~\ref{p:1} that the sets 
\begin{equation}
\label{e:Omega_iex}
\begin{cases}
\Omega_1\!=\!\mEnge{(u,v)\in\HH\times\GG}
{\phi^*\biggl(\proj_{\cdom\phietoile}
\biggl(\dfrac{\|u\|}{\gamma}\biggr)\biggr)=0\;\text{and}\;
s(\proj_{\overline{S}}v)=0}\\[4mm]
\Omega_2\!=\!\mEnge{(u,v)\in\HH\times\GG}
{\phi^*\biggl(\proj_{\cdom\phietoile}
\biggl(\dfrac{\|u\|}{\gamma}\biggr)\biggr)\in\RPP
\;\text{and}\;
s\biggl(\prox_{\gamma\phi^*\bigl(\proj_{\cdom\phietoile}
\bigl(\frac{\|u\|}{\gamma}\bigr)\bigr)\bas{(-s)}}\,v\biggr)=0}\\[4mm]
\Omega_3\!=\!\mEnge{(u,v)\in\HH\times\GG}
{\phi^*\biggl(\prox_{\frac{s(\proj_{\overline{S}}v)}{\gamma}
\phietoile}\biggl(\dfrac{\|u\|}{\gamma}\biggr)\biggr)=0
\;\text{and}\;s(\proj_{\overline{S}}v)\in\RPP}\\[4mm]
\Omega_4\!=\!(\HH\times\GG)\smallsetminus
(\Omega_1\cup\Omega_2\cup\Omega_3)
\end{cases}
\end{equation}
form a partition of $\HH\times\GG$,
which brings up four cases for consideration:
\begin{itemize}
\item 
$(x,y)\in\Omega_1$: We derive from Lemma~\ref{l:j1}\ref{l:j1-i}, 
Lemma~\ref{l:5}\ref{l:5ii}, and Proposition~\ref{p:1}\ref{p:1i}
that
\begin{equation}
\label{e:Kjexo1}
\prox_{\gamma(\varphi\persp s)}(x,y)
=
\begin{cases}
\biggl(\biggl(1-\dfrac{\gamma}{\|x\|}\,
\proj_{\cdom\phietoile}
\biggl(\dfrac{\|x\|}{\gamma}\biggr)\biggr)x,\proj_{\overline{S}}\, 
y\biggr),&\text{if}\:\:x\neq0;\\[3mm]
\bigl(0,\proj_{\overline{S}}\, y\bigr),&\text{if}\:\:x=0.
\end{cases}
\end{equation}
\item 
$(x,y)\in\Omega_2$: We derive from 
Lemma~\ref{l:j1}\ref{l:j1-i}, 
Lemma~\ref{l:5}\ref{l:5ii}, and
Proposition~\ref{p:1}\ref{p:1iii} that
\begin{equation}
\label{e:Kjexo2}
\prox_{\gamma(\varphi\persp s)}(x,y)
=
\begin{cases}
\biggl(\biggl(1-\dfrac{\gamma}{\|x\|}\,
\proj_{\cdom\phietoile}
\biggl(\dfrac{\|x\|}{\gamma}\biggr)\biggr)x,
\prox_{\gamma\phietoile\bigl(\proj_{\cdom\phietoile}
\bigl(\frac{\|x\|}{\gamma}\bigr)\bigr)\bas{(-s)}}\,y\biggr),
&\text{if}\:\:x\neq0;\\[4mm]
\Big(0,\prox_{\gamma\phietoile(0)\bas{(-s)}}\,y\Big),
&\text{if}\:\:x=0.
\end{cases}
\end{equation}
\item 
$(x,y)\in\Omega_3$: We derive from 
Lemma~\ref{l:j1}\ref{l:j1-i}, 
Lemma~\ref{l:5}\ref{l:5ii}, and
Proposition~\ref{p:1}\ref{p:1ii} that
\begin{equation}
\label{e:Kjexo3}
\prox_{\gamma(\varphi\persp s)}(x,y)
=
\begin{cases}
\biggl(\biggl(1-\dfrac{\gamma}{\|x\|}\,
\prox_{\frac{s(\proj_{\overline{S}}y)}{\gamma}
\phietoile}\biggl(\dfrac{\|x\|}{\gamma}\biggr)\biggr)x,
\proj_{\overline{S}}\, y\biggr),&\text{if}\:\:x\neq 0;\\[3mm]
\bigl(0,\proj_{\overline{S}}\, y\bigr),&\text{if}\:\:x=0.
\end{cases}
\end{equation}
\item 
$(x,y)\in\Omega_4$: In view of
Lemma~\ref{l:5}\ref{l:5ii} and Lemma~\ref{l:j1}\ref{l:j1-i}, 
Theorem~\ref{t:2}\ref{t:2i} and Proposition~\ref{p:1}\ref{p:1iv} 
guarantee the existence of a unique solution
$\eta\in\RPP$ to
\begin{equation}
\label{e:mainrex2}
\eta=s\biggl(\prox_{\gamma
\phietoile\bigl(\prox_{\frac{\eta}{\gamma}\phietoile}
\bigl(\frac{\|x\|}{\gamma}\bigr)\bigr)\bas{(-s)}}\,y\biggr)
\end{equation}
and \cite[Proposition~24.32]{Livre1} yields
\begin{equation}
\label{e:Kjexo4}
\prox_{\gamma(\varphi\persp s)}(x,y)=
\begin{cases}
\biggl(\biggl(1-\dfrac{\gamma}{\|x\|}\,
\prox_{\frac{\eta}{\gamma}\phietoile}
\biggl(\dfrac{\|x\|}{\gamma}\biggr)\biggr)x,\prox_{\gamma
\phietoile\bigl(\prox_{\frac{\eta}{\gamma}\phietoile}
\bigl(\frac{\|x\|}{\gamma}\bigr)\bigr)\bas{(-s)}}\,y\biggr),
&\text{if}\:\:x\neq 0;\\
\Big(0,\prox_{\gamma\phietoile(0)\bas{(-s)}}\,y\Big),
&\text{if}\:\:x=0.
\end{cases}
\end{equation}
\end{itemize}
\end{example}

Our next example addresses the counterpart of the previous one in
which the sign of $\phi^*$ is flipped.

\begin{example}
\label{ex:28p} 
Let $\phi\in\Gamma_0(\RR)$ be an even coercive function such 
that 
$\phi^*(\RR)\subset\RM\cup\{\pinf\}$ and
$(\phi^*)^{-1}(\RMM)\neq\emp$, and set
$\varphi=\phi\circ\|\cdot\|$. As in Example~\ref{ex:28},
$\phi^*\in\Gamma_0(\RR)$ is even, $0\in \inte\dom\phi^*$,
$\varphi\in\Gamma_0(\HH)$, and
$\varphi^*=\phi^*\circ\|\cdot\|$.
In turn, $\varphi^*(\HH)\subset\RM\cup\{\pinf\}$ and
$(\varphi^*)^{-1}(\RMM)\neq\emp$. In addition, let
$s\in\Gamma_0(\GG)$, suppose that 
$s^{-1}(\RPP)\neq\emp$, and let $(x,y)\in\HH\times\GG$. Then, by
Proposition~\ref{p:2}\ref{p:2i}, 
\begin{equation} 
\label{e:3b5}
(\varphi\persp s)(x,y)=
\begin{cases}
s(y){\phi}
\biggl(\dfrac{\|x\|}{s(y)}\biggr),
&\text{if}\;\;0<s(y)<\pinf;\\
(\rec\phi)(\|x\|),&\text{if}\;\;y\in\cconv 
S\:\:\text{and}\:\:s(y)\leq 0;\\
\pinf,&\text{otherwise}.
\end{cases}
\end{equation} 
Now let $\gamma\in\RPP$. It follows from
Lemma~\ref{l:5}\ref{l:5iii}
and Proposition~\ref{p:1p} that the sets 
\begin{equation}
\label{e:Omega_piex}
\hskip -3.0mm
\begin{cases}
\Xi_1=\mEnge{(u,v)\in\HH\times\GG}
{\haut{s}(\proj_{\cconv S}v)=0\;\;\text{and}\;\;
\phi^*\biggl(\proj_{\cdom\phietoile}\,
\biggl(\dfrac{\|u\|}{\gamma}\biggr)\biggr)=0}\\[4mm]
\Xi_2=\mEnge{(u,v)\in\HH\times\GG}
{\haut{s}(\proj_{\cconv S} v)\in\RPP
\;\;\text{and}\;
\phi^*\biggl(\prox_{\frac{1}{\gamma}
\haut{s}(\proj_{\cconv S} v)\phietoile}\,
\biggl(\dfrac{\|u\|}{\gamma}\biggr)\biggr)=0}\\[4mm]
\Xi_3=\mEnge{(u,v)\in\HH\times\GG}
{\haut{s}\Big(\prox_{\gamma\bigl(-\phietoile\bigl(
\proj_{\cdom\phietoile}\bigl(\frac{\|u\|}{\gamma}\bigr)\bigr)\bigr)
\haut{s}}v\Big)=0
\;\;\text{and}\;\;\phi^*\biggl(\proj_{\cdom\phietoile}\,
\biggl(\dfrac{\|u\|}{\gamma}\biggr)\biggr)<0}\\[4mm]
\Xi_4=(\HH\times\GG)\smallsetminus
(\Xi_1\cup\Xi_2\cup\Xi_3)
\end{cases}
\end{equation}
form a partition of $\HH\times\GG$. This leads us to 
consider the following cases:
\begin{itemize}
\item 
$(x,y)\in\Xi_1$: We derive from 
Lemma~\ref{l:j1}\ref{l:j1-i}, 
Lemma~\ref{l:5}\ref{l:5ii}, and
Proposition~\ref{p:1p}\ref{p:1pi} that
\begin{equation}
\label{e:Kjexo1p}
\prox_{\gamma(\varphi\persp s)}(x,y)
=
\begin{cases}
\biggl(\biggl(1-\dfrac{\gamma}{\|x\|}\,
\proj_{\cdom\phietoile}
\biggl(\dfrac{\|x\|}{\gamma}\biggr)\biggr)x,\proj_{\cconv S}\, 
y\biggr),&\text{if}\:\:x\neq0;\\
\bigl(0,\proj_{\cconv S}\, y\bigr),&\text{if}\:\:x=0.
\end{cases}
\end{equation}
\item 
$(x,y)\in\Xi_2$: We derive from 
Lemma~\ref{l:j1}\ref{l:j1-i}, 
Lemma~\ref{l:5}\ref{l:5ii}, and
Proposition~\ref{p:1p}\ref{p:1piii} that
\begin{equation}
\label{e:Kjexo2p}
\prox_{\gamma(\varphi\persp s)}(x,y)
=
\begin{cases}
\biggl(\biggl(1-\dfrac{\gamma}{\|x\|}\,
\prox_{\frac{1}{\gamma}\haut{s}
(\proj_{\cconv S}y)\phietoile}
\biggl(\dfrac{\|x\|}{\gamma}\biggr)\biggr)x,
\proj_{\cconv S}\, y\biggr),
&\text{if}\:\:x\neq0;\\[4mm]
\bigl(0,\proj_{\cconv S}y\bigr),
&\text{if}\:\:x=0.
\end{cases}
\end{equation}
\item 
$(x,y)\in\Xi_3$: We derive from 
Lemma~\ref{l:j1}\ref{l:j1-i}, 
Lemma~\ref{l:5}\ref{l:5ii}, and
Proposition~\ref{p:1p}\ref{p:1pii} that
\begin{equation}
\label{e:Kjexo3p}
\prox_{\gamma(\varphi\persp s)}(x,y)=
\begin{cases}
\biggl(\biggl(1-\dfrac{\gamma}{\|x\|}\,
\proj_{\cdom\phietoile}
\biggl(\dfrac{\|x\|}{\gamma}\biggr)\biggr)x,
\prox_{\bigl(-\gamma\phietoile\bigl(\proj_{\cdom\phietoile}
\bigl(\frac{\|x\|}{\gamma}\bigr)\bigr)\bigr)\haut{s}}y\biggr),
&\text{if}\:\:x\neq0;\\
\bigl(0,\prox_{(-\gamma\phietoile(0))\haut{s}}y\bigr),
&\text{if}\:\:x=0.
\end{cases}
\end{equation}
\item 
$(x,y)\in\Xi_4$: By virtue of Lemma~\ref{l:5}\ref{l:5ii} and
Lemma~\ref{l:j1}\ref{l:j1-i}, it follows from 
Theorem~\ref{t:2}\ref{t:2iii} and
Proposition~\ref{p:1p}\ref{p:1piv} that there exists a 
unique solution $\eta\in\RPP$ to
\begin{equation}
\label{e:mainrex2p}
\eta=-\phi^*\biggl(\prox_{\frac{1}{\gamma}
\haut{s}(\prox_{\gamma\eta\haut{s}} y)\phietoile}\,
\biggl(\frac{\|x\|}{\gamma}\biggr)\biggr)
\end{equation}
and, hence, \cite[Proposition~24.32]{Livre1} yields
\begin{equation}
\label{e:Kjexo4p2}
\prox_{\gamma(\varphi\persp s)}(x,y)=
\begin{cases}
\biggl(\biggl(1-\dfrac{\gamma}{\|x\|}\,
\prox_{\frac{1}{\gamma}\haut{s}
(\prox_{\gamma\eta\haut{s}}y)\phietoile}
\biggl(\dfrac{\|x\|}{\gamma}\biggr)\biggr)x,
\prox_{\gamma\eta\haut{s}}y\biggr),
&\text{if}\:\:x\neq 0;\\
\bigl(0,\prox_{(-\gamma\phietoile(0))\haut{s}}y\bigr),
&\text{if}\:\:x=0.
\end{cases}
\end{equation}
\end{itemize}
\end{example}

Starting with the work \cite{Bren00}, convex optimization problems
involving the perspective function with linear scaling~\eqref{e:0}
appear in optimal transport theory and in mean field games
\cite{Achd12,Bena15,Lasr07,Vill03}. In this context, numerical
methods employing its proximity operator are investigated in
\cite{Bric18}. Extensions to variational models with $q$th root
scaling functions have been proposed to address optimal control of
McKean--Vlasov systems with congestion \cite{Ach16a,Ach16b}, as
well as optimal transport with nonlinear mobilities \cite{Dolb09}.
In the following example, we compute the proximity operator of
perspective functions with such scaling functions and incorporate a
scale constraint which can be used, in particular, to model density
constraints \cite{Card16,Daud22,Daud23,Mesz15}.

\begin{example}
\label{ex:4}
Let $p\in\left]1,\pinf\right[$ and $q\in\zeroun$.
Set $p^*=p/(p-1)$, and $\phi=|\cdot|^p/p$. Let $I\subset\RP$ be a
closed interval with nonempty interior such that $0\in I$ and
define
\begin{equation}
\label{e:v1}
\psi\colon\RR\to\{\minf\}\cup\RP\colon y\mapsto 
\begin{cases}
y^q,&\text{if}\:\:y\geq 0;\\
\minf,&\text{if}\:\:y<0,
\end{cases}
\quad\text{and set}\quad s=\psi-\iota_{I}.
\end{equation}
Then $\phi^*=|\cdot|^{\petoile}/p^*$, $\bas{(-s)}=-s$,
$\dom\phi^*=\RR$, and $\overline{S}=I$. This places us 
in the framework of Example~\ref{ex:28} with $\GG=\RR$,
\eqref{e:9s5} reduces to
\begin{equation}
\label{e:9s5ex}
(\varphi\persp s)(x,y)=
\begin{cases}
\dfrac{\|x\|^p}{p\,y^{q(p-1)}},
&\text{if}\;\;0<y\in I;\\
0,&\text{if}\;\;x=0\:\:\text{and}\:\:y=0;\\
\pinf,&\text{otherwise},
\end{cases}
\end{equation}
and \eqref{e:Omega_iex} reduces to
\begin{equation}
\label{e:Omega_iexp}
\hskip -3.0mm
\begin{cases}
\Omega_1=\{0\}\times\RM\\
\Omega_2=\mEnge{(u,v)\in\HH\times\RR}
{u\neq 0\;\;\text{and}\;
\prox_{\frac{\gamma}{\petoile}
\big|\frac{\|u\|}{\gamma}\big|^{\petoile}(-s)}\,v=0}\\[1mm]
\Omega_3=\{0\}\times\RPP\\
\Omega_4=\bigl((\HH\smallsetminus\{0\})\times\RR\bigr)
\smallsetminus\Omega_2.
\end{cases}
\end{equation}
Note that, for every $\mu\in\RPP$ and every $v\in\RR$, 
Lemma~\ref{l:j1}\ref{l:j1iii} yields
$\prox_{-\mu s}v\in\dom(-s)=\RP$. Moreover, if $\prox_{-\mu s}v=0$,
Lemma~\ref{l:j1}\ref{l:j1iii} implies
\begin{equation}
(\forall y\in\RP)\quad y^q(y^{1-q}v+\mu)\leq 0,
\end{equation}
which is not possible.
Hence, $\prox_{-\mu s}v\in\RPP$ and we deduce that $\Omega_2=\emp$ 
and $\Omega_4=(\HH\smallsetminus\{0\})\times\RR$.
Therefore, Example~\ref{ex:28} and 
\cite[Proposition~24.47]{Livre1} yield
\begin{equation}
\label{e:y91}
\prox_{\gamma(\varphi\persp s)}(x,y)=
\begin{cases} 
\bigl(0,\proj_Iy\bigr),&\text{if}\;\;x=0;\\[2mm]
\biggl(\biggl(1-\dfrac{\gamma}{\|x\|}\,
\prox_{\frac{\eta}{\gamma}\phietoile}
\biggl(\dfrac{\|x\|}{\gamma}\biggr)\biggr)x,
\proj_{I}\biggl(\prox_{\frac{\gamma}{\petoile}
\big|\prox_{\frac{\eta}{\gamma}\phietoile}
\bigl(\frac{\|x\|}{\gamma}\bigr)\big|^{\petoile}(-\psi)}\, 
y\biggr)\biggr),&\text{if}\;\;x\neq 0,
\end{cases}
\end{equation}
where, if $x\neq 0$, $\eta$ is the unique solution in $\RPP$ to
\begin{equation}
\label{e:7y1}
\eta=\bigg|\proj_I\Big(\prox_{\frac{\gamma}{\petoile}
\big|\prox_{\frac{\eta}{\gamma}\phietoile}
\bigl(\frac{\|x\|}{\gamma}\bigr)\big|^{\petoile}(-\psi)}\, 
y\Big)\bigg|^q.
\end{equation}
Note that, in view of \eqref{e:7y1}, \eqref{e:y91} can be written
as
\begin{equation}
\label{e:y81}
\prox_{\gamma(\varphi\persp s)}(x,y)=
\begin{cases} 
\bigl(0,\proj_Iy\bigr),&\text{if}\;\;x=0;\\[2mm]
\biggl(\biggl(1-\dfrac{\gamma}{\|x\|}\,
\prox_{\frac{\eta}{\gamma}\phietoile}
\biggl(\dfrac{\|x\|}{\gamma}\biggr)\biggr)x,\eta^{1/q}\biggr),
&\text{if}\;\;x\neq 0.
\end{cases}
\end{equation}
In the case when $x\neq 0$, let us point out that, given
$\xi\in\RPP$, \cite[Example~24.38]{Livre1} asserts that 
$\rho(\xi)=\prox_{\frac{\xi}{\gamma}\phietoile}
({\|x\|}/{\gamma})$ is the unique solution to 
\begin{equation}
\label{e:proxphi}
\|x\|=\rho\gamma+\xi\rho^{\,\petoile-1}.
\end{equation}
On the other hand, for every $\mu\in\RPP$, in view of 
Lemma~\ref{l:j1}\ref{l:j1iii},
$z(\mu)=\prox_{\gamma\mu(-\psi)}\,y\in\RPP$
is the unique solution to 
\begin{equation}
y=z-q\gamma\mu z^{q-1}.
\end{equation}
Therefore, finding $\eta\in\RPP$ such that \eqref{e:7y1} holds
amounts to solving 
$\eta=|\proj_I(z(\rho(\eta)^{\petoile}/p^*))|^q$, that is,
\begin{equation}
\eta=\bigg|\min\bigg\{z\biggl(\dfrac{\rho(\eta)^{\petoile}}
{p^*}\biggr),
\sup I\bigg\}\bigg|^q,
\end{equation}
which can be handled by one-dimensional root-finding methods.
\end{example}

As mentioned in Section~\ref{sec:1}, problems in statistical
inference and in robust statistics involve joint location/scale
estimation; see \cite{Anto10,Ejst20,Stat21,Lamb16,Owen07} for
further instances of this model. Most of these models involve the
perspective function of the Huber function with a scalar scale. Our
analysis allows us to extend it to nonlinear scales. An
illustration of Example~\ref{ex:28p} in this context is provided in
the following example, where the proximity operator of the
resulting function, a central piece in algorithms for solving
concomitant estimation problems \cite{Jmaa18,Ejst20,Stat21}, is
computed.

\begin{example}
\label{ex:5}
Let $\phi$ is the Huber function with parameter $\alpha\in\RPP$,
that is,
\begin{equation}
\phi\colon\xi\mapsto
\begin{cases}
\alpha|\xi|,&\;\;\text{if}\:\:|\xi|>\alpha;\\[2mm]
\dfrac{|\xi|^2+\alpha^2}{2},&\;\;
\text{if}\:\:|\xi|\leq\alpha.
\end{cases}
\end{equation}
It follows from \cite[Example~13.7]{Livre1} that
\begin{equation}
\phi^*\colon\xi^*\mapsto 
\begin{cases}\pinf, 
&\text{if}\:\:|\xi^*|>\alpha;\\[2mm]
\dfrac{|\xi^*|^{2}-\alpha^{2}}{2},
&\text{if}\:\:|\xi^*|\leq\alpha.
\end{cases}
\end{equation}
Therefore $\dom\phi^*=[-\alpha,\alpha]$ and we deduce from 
\cite[Example~24.9]{Livre1} and Lemma~\ref{l:j1}\ref{l:j1ii} that
\begin{equation}
\label{e:je}
\prox_{\gamma\phietoile}\colon \xi\mapsto 
\begin{cases}
\alpha\,\sign(\xi),&\text{if}\:\:|\xi|>(\gamma+1)\alpha;\\
\xi/(\gamma+1),&\text{if}\:\:|\xi|\leq(\gamma+1)\alpha.
\end{cases}
\end{equation}
Furthermore, let $\beta\in\RPP$ and set
\begin{equation}
\label{e:v15}
s\colon\RR\to\RP\colon y\mapsto 
\sqrt{\beta+|y|^2}.
\end{equation}
Then $\haut{s}=s$ and $\cconv{S}=\RR$. Altogether, we are
in the framework of Example~\ref{ex:28p} with $\GG=\RR$,
\eqref{e:3b5} reduces to
\begin{equation} 
\label{e:3b5ex}
(\varphi\persp s)(x,y)=
\begin{cases}
\alpha\|x\|,&\text{if}\:\: \|x\|>\alpha \sqrt{\beta+|y|^2};\\[2mm]
\dfrac{\|x\|^2+\alpha^2(\beta+y^2)}
{2\sqrt{\beta+|y|^2}}&\text{if}\:\: 
\|x\|\leq\alpha \sqrt{\beta+|y|^2},
\end{cases}
\end{equation} 
and \eqref{e:Omega_piex} reduces to
\begin{equation}
\label{e:Omega_iexp5}
\hskip -3.0mm
\begin{cases}
\Xi_1=\Xi_3=\emp\\
\Xi_2=\menge{(u,v)\in\HH\times\RR}
{\|u\|\geq\alpha\bigl(\sqrt{\beta+v^2}+\gamma\bigr)}\\
\Xi_4=\menge{(u,v)\in\HH\times\RR}
{\|u\|<\alpha\bigl(\sqrt{\beta+v^2}+\gamma\bigr)}.
\end{cases}
\end{equation}
Therefore, we deduce from Example~\ref{ex:28p} that 
\begin{align}
\prox_{\gamma(\varphi\persp s)}(x,y)
&=
\begin{cases}
\biggl(\biggl(1-\dfrac{\alpha\gamma}{\|x\|}\biggr)x,y\biggr),
&\text{if}\:\:\|x\|\geq\alpha\bigl(\sqrt{\beta+y^2}+\gamma\bigr);
\\[4mm]
\biggl(\biggl(\dfrac{\sqrt{\beta+q(\eta,y)^2}}
{\gamma+\sqrt{\beta+q(\eta,y)^2}}\biggr)x,q(\eta,y)\biggr),
&\text{if}\:\:\|x\|<\alpha\bigl(\sqrt{\beta+y^2}+\gamma\bigr),
\end{cases}
\end{align}
where $\eta$ is the unique solution in $\RPP$ to
\begin{equation}
\label{e:88}
\eta=\dfrac{\alpha^2|\gamma+\sqrt{\beta+q(\eta,y)^2}|^2-\|x\|^2}
{2|\gamma+\sqrt{\beta+q(\eta,y)^2}|^2}
\end{equation}
and $q(\eta,y)$ is the unique solution to the quartic equation
\begin{equation}
\label{e:qf}
q^4-2yq^3+(y^2+\beta-\gamma^2\eta^2)q^2-2\beta yq+\beta y^2=0,
\end{equation}
in $[0,y]$ if $y\geq 0$ and in $[y,0]$ if $y<0$. In view of 
\cite[Section~2.2.3]{Zwil18}, \eqref{e:qf} can be solved 
explictly. We then solve \eqref{e:88} via one-dimensional
root-finding algorithms \cite[Chapter~9]{Pres07}.
\end{example}

\end{document}